\def\marg#1#2{\def\marginnotetextwidth{\the\textwidth}\marginnote{\bf #1}{\bf #2}}
\numberwithin{equation}{section}
\newtheorem{theorem}{Theorem}[section]
\newtheorem{theorem*}{Theorem}
\newtheorem{prop}[theorem]{Proposition}
\newtheorem{lemma}[theorem]{Lemma}
\newtheorem{cor}[theorem]{Corollary}
\newtheorem{definition}[theorem]{Definition}
\newtheorem*{definition*}{Definition}
\newtheorem{remark}[theorem]{Remark}
\newcommand{\C}{{\mathbb C}}
\newcommand{\Z}{{\mathbb Z}}
\newcommand{\GL}{{\operatorname{\bf GL_n}}}
\newcommand{\Ne}{{\mathcal{N}}}
\newcommand{\Oe}{{\mathcal{O}}}
\newcommand{\w}{{\operatorname{\bf w}}}
\newcommand{\di}{\operatorname{diag}}
\newcommand{\tr}{{\operatorname{tr}}}
\newcommand{\Cr}{{\operatorname{ Cr}}}
\newcommand{\Me}{{\operatorname{ M}}}
\newcommand{\Imm}{\operatorname{Im}}
\title[Double cosets $NgN$\dots]{Double cosets $Ng N$ of normalizers of maximal tori of  simple algebraic groups and  orbits of  partial actions of  Cremona subgroups}
\author{N. Gordeev }
\thanks{The research
is  financially supported by the grant of RFFI-19-01-00297. }
\address{ Department of Mathematics, Russian State Pedagogical University, Moijka 48, St.Petersburg, 191-186, Russia}
\email{nickgordeev@mail.ru}
\date{12.12.2021}
\author{E. Egorchenkova}
\address{Department of Mathematics,\\ Herzen State
Pedagogical University,\\ 48 Moika Embankment, \\191186, St.Petersburg,\\
RUSSIA}
 \email{e-egorchenkova@mail.ru}
 \subjclass{Primary 20G15; Secondary 20G07}
 \keywords{Simple algebraic groups,  Bruhat decomposition, partial action,
 Cremona group, {\it the problem of pair of matrices}}
\renewcommand{\GL}{\mathrm{GL}}
\newcommand{\SL}{\mathrm{SL}}
\def\<{\langle}
\def\>{\rangle}
\def\tilde{\widetilde}
\def\a{\alpha}
\def\b{\beta}
\def\phi{\varphi}
\def\Ue{\mathcal U}
\def\t{{\mathfrak t}}
   \def\w{{\bf w}}
\begin{document}
\maketitle

\parindent0pt

\begin{abstract}
Let $G$ be a simple algebraic group over an algebraically closed field $K$ and let $N = N_G(T)$ be the normalizer of a fixed maximal torus $T\leq G$.  Further, let $U$ be the unipotent radical of a fixed Borel subgroup $B$ that contains $T$ and let $U^-$ be the unipotent radical of the opposite Borel subgroup $B^-$. The Bruhat decomposition implies the decomposition $G = NU^-UN$. The Zariski closed subset $U^-U\subset G$ is isomorphic to the affine space $A_K^m$ where $m = \dim G -\dim T$ is the number of roots in the corresponding root system. Here we construct a  subgroup $\Ne\leq \Cr_m(K)$ that ``acts partially'' on $A_K^m\approx\Ue$ and we show that there is one-to-one correspondence between the orbits of such a partial action and the set of double cosets $\{NgN\}$.  Here  we also calculate the set $\{g_\a\}_{\a \in \mathfrak A}\subset \Ue$ in the simplest case $G = \SL_2(\C)$.

\end{abstract}

\section*{Introduction}

{\it Double cosets of transformation groups.}  Let $G$ be a group which acts on a set $X$. Further, let $x_1, x_2\in X, \,\,H_1 = \operatorname{St}_{ x_1}, H_2 = \operatorname{St}_{ x_2} $ be  stabilizers of $x_1, x_2$ and let $\Oe_{x_1}, \Oe_{x_2}$  be  orbits of $x_1, x_2$. Then there is one-to-one correspondence between the set of  $G$-orbits of the natural action on  $\Oe_{x_2}\times \Oe_{x_1}$ and the set of double cosets $\{H_1\, g_\a H_2\}_{\a\in \mathfrak A}$ (it is a simple and well-known fact). Namely,
$$\{(g_\a(x_2), x_1)\}_{\a\in \mathfrak A}\,\,\,\text{ is the minimal set of representations of  G-orbits of} \,\,  \Oe_{x_2}\times \Oe_{x_1}.\eqno(*)$$

\bigskip

{\it The  pairs of maximal tori of simple algebraic groups.}
In this paper we consider the case when $G$ is a simple algebraic group over an algebraically closed field $K$. The decomposition of   a group $G$ into the union of double cosets $G = \cup_{g_i} H_2 g_i H_1$ is a very important construction in the theory of algebraic groups, especially in the case when $H_1,  H_2$ are parabolic subgroups. For these cases the decomposition is finite. Here we consider the case  when $H_1 = H_2 = N = N_G(T)$ is the normalizer of a fixed maximal torus $T$. Now let $X$ be the set of all maximal tori of $G$. The group $G$ acts on $X$ by conjugation. Then $X$ is just one $G$-orbit of $T\in X$  and $N := N_G(T) = \operatorname{St}_T$. Thus, we have one-to-one correspondence between the set of  $G$-orbits of the set  $X\times X$  and the set of double cosets $\{N g_\a N\}_{\a \in \mathfrak A}$. Further, we have  the decomposition
$$G =  N U^-UN,\,\,\,\text{where}\,\, U = R_u(B), \,\,U^- = R_u(B^-)$$
(here $B$ is a fixed Borel subgroup that contains $T$,  $B^- = w_0 Bw_0$ is the opposite Borel subgroup, $R_u(B)$ is the unipotent radical of $B$). Note,
$$\Ue:= U^-U\approx A_K^m$$
where $m $ is the number of roots in the corresponding root system and $A_K^m$ is the $m$-dimensional affine space over $K$.
Thus we have one-to-one correspondence between the set of  $G$-orbits of $X\times X$  and the set
of double cosets  $\{N\mathfrak u N\}_{\mathfrak u \in \Ue}$.

\bigskip

{\it  The group $\Ne\leq \Cr_m(K)$}.
To emphasize the minimal set of representatives $\{\mathfrak u_\a \in \Ue\}$ of double cosets $\{N\mathfrak u N\}_{\mathfrak u \in \Ue}$ we use the following construction. Using the multiplication of $G$ by the elements of the group $N$ on left-right sides we construct a subgroup $\Ne \leq \Cr_m(K)$ of the Cremona group $\Cr_m(K)$ which acts {\it  partially } (see, section 1) on $\Ue \approx A^m_K$ and we get the following

\bigskip

{\bf Theorem 1.} {\it Elements $\mathfrak u_1, \mathfrak u_2 \in \Ue$ belong to the same double coset $N \mathfrak u  N$ if and only if they are in the same $\Ne$-orbit.}

\bigskip

Hence we  have

{\bf Corollary 1.} {\it There is one-to-one correspondence between the set of  $G$-orbits  of the pairs of maximal tori of $G$ and the $\Ne$-orbits of the subgroup $\Ne\leq \Cr_m(N)$ with respect to the partial action on $A_K^m$. }

\bigskip

The definition of the group $\Ne$ depends on the group $W\times W$ but it is not unique.  In particular, it depends on the choices of the preimages $\dot w$ of elements of Weyl group $W= N/T$ in $N$.

\bigskip

{\it The problem of pair matrices.}  There is a reduction of the well-known ``wild'' problem of {\it the classification of the pairs of matrices} $(A, B)$  up to conjugation   by a single element of $\GL_n(\C)$, where $A, B \in \Me_n(\C)$. Namely, we may change  the description of all $\GL_n(K)$-orbits of the action
$(A, B)\rightarrow (gAg^{-1}, gBg^{-1})$  to the description of fibers $$\pi: \Me_n(\C)\times \Me_n(\C)\rightarrow \big(\Me_n(\C)\times \Me_n(\C)\big)/G$$ where $\pi$ is the quotient map and   $\big(\Me_n(\C)\times \Me_n(\C)\big)/G$ is the corresponding algebraic factor (see, \cite{VP}, 9.5).

We also may formulate the following ``subproblem'': to classify $\GL_n(\C)$-orbits on the set $C_A\times C_B$ where $C_A, C_B\subset \Me_n(\C)$ are the $\GL_n(K)$-orbits of  $A, B$ for given $A, B$.  We also may give such a classification  ``up to an isomorphism of centralizers''. Say, if we take $A, B$ to be regular semisimple unimodular matrices the classification of such pairs  ``up to  an isomorphism of centralizers'' is exactly the classification of the pairs of maximal tori of $\SL_n(\C)$.

\bigskip

{\it Case $G = \SL_2(\C)$.} Here we calculate the system of representatives  of maximal  tori  for the simplest case $G = \SL_2(\C)$ (see, Corollary \ref{hhhhjhhjjs1}). Namely, let
$$g_\a:= \begin{pmatrix} 1&\a\cr 1& 1+\a\cr\end{pmatrix} \in \Ue \subset \SL_2(\C)$$
and let

$$\mathcal K : = \{\a = a+ bi \in \C\,\,\,\mid\,\,\, a \geq -\frac{1}{2}\} \setminus \{\a  = -\frac{1}{2} + bi \in \C\,\,\mid\,\,\,b <0\}.$$

\bigskip

{\bf Theorem 2.}
{\it The set of pairs
$$ (T, T) \cup ( g_\a T g_\a^{-1}, T)_{\a \in \mathcal K}$$
is a minimal set of representatives of  the orbits of the pairs of tori of $G\times G$ under conjugation by elements of $G$}.

\bigskip

Note, that $\mathcal K$ is a subset of $\C$ which is homeomorphic to   $\C$ in the standard topology (indeed, the map $\varepsilon: \mathcal K \rightarrow \C$ which is given by the formula $\epsilon(z) = (z+\frac{1}{2})^2$ defines the corresponding homeomorphism).

At the end of the paper we also consider the description of $\SL_2(\C)$-orbits of pairs $(s, t)$ of non-central semisimple elements of $\SL_2(K)$.

\bigskip

{\bf Notation and terminology.}

Here

$K$ is an algebraically closed field;

$\C$ is a complex number field;

$G$ is a simple algebraic group over $K$ (here we identify $G$ with the group of $K$ points $G(K)$) that corresponds to the root system $R$ of the rank $r$,  $R = R^+\cup R^-$ is the fixed decomposition into the union of positive and negative roots;

we consider  only  Zariski topology on $G$;

$T$ is a fixed maximal torus of $G$, $B\leq G$ is a fixed Borel subgroup that contains $T$,  $B^- = w_0 B w_0$ is the opposite Borel subgroup (here $w_0$ is the longest element of the Weyl group);

$U = R_u(B),\,\,U^- = R_u(B^-)$ are the unipotent radicals of $B, B^-$;

$N = N_G(T)$ is the normalizer of $T$;

$N/T = W$ is the Weyl group of $G$;

for $w \in W$ by $\dot w\in N$ we denote any preimage of $w$.

\section{Partial action of the Cremona subgroups}

The partial action of the group is used in different cases (see, for instance, \cite{A}, \cite{E}). Here we give the definition of the partial action in a little bit different form.

\begin{definition}
\label{defhhhfkkhgfs1}
 Let $\Gamma$ be a group and let $X$ be a set. We say that a partial action of $\Gamma$ on $X$  is defined  if for every $x \in X$
 a subset $\Gamma(x)\subset \Gamma$ is fixed and the following conditions hold:

 i. for every $\sigma \in \Gamma(x)$ an element  $\sigma(x)\in X$ is defined;

 ii. the identity $e \in \Gamma$ belongs to every $\Gamma(x)$ and $e(x) = x$;

 iii. if $\tau \in \Gamma(\sigma(x))$ then $\tau\sigma \in \Gamma(x)$ and
  $\tau(\sigma(x)) =\tau\sigma(x)$;

 iv. $\sigma^{-1}\in \Gamma(\sigma(x))$.
  \end{definition}

\bigskip

 {\it Orbits of partial action.}  For  partial actions we may define {\it orbits} of such actions. Namely, we say that  $x, y \in X$ belong to the same $\Gamma$-orbit if and only if one can find an element $\sigma \in \Gamma(x)$ such that $\sigma(x) = y$.  Obviously, the conditions  $i.- iv.$ of the Definition \ref{defhhhfkkhgfs1} can guarantee that the $\Gamma$-orbits are  classes of equivalence under the equivalence $$x\sim_\Gamma y\Leftrightarrow \sigma(x) = y\,\,\text{for some}\,\,\sigma \in \Gamma (x).$$

  {\it  The Cremona group action on the affine space}. Here we consider the Cremona group $\Cr_n(K)$ as the group of automorphisms of
  the field $K(x_1, \ldots, x_n)$ (see, for instance, \cite{S}).  Let
   $$A_K^n = \{a = (\a_1, \ldots, \a_n)\,\,\mid\,\,\a_i \in K\}$$
   be the $n$-dimensional  affine space. Every element $\sigma \in \Cr_n(K)$ is presented by the sequence of rational functions
  $$\sigma = \Big(\frac{\phi_1}{\psi_1}, \ldots, \frac{\phi_n}{\psi_n}\Big),$$
  where $\phi_i, \psi_i \in K[x_1, \ldots, x_n], \,\psi_i \ne 0,\, (\phi_i, \psi_i) = 1$, and we may define
  $$\sigma(a)\stackrel{def} =  \Big(\frac{\phi_1(a)}{\psi_1(a)}, \ldots, \frac{\phi_n(a)}{\psi_n(a)}\Big)$$
  for every point $a \in A_K^n$ such that $$a \in U_\sigma := \{a^\prime  \in A_K^n\,\,\,\mid\,\,\,\psi_i(a^\prime) \ne 0\,\,\,\text{for every}\,\,\,i\}.$$

  The set $U_\sigma$ is an open subset of $A_K^n$ where the rational map $\sigma$ is regular.

  \bigskip
 Let
 $$\tau = \Big(\frac{\mu_1}{\nu_1}, \ldots, \frac{\mu_n}{\nu_n}\Big)\in \Cr_n(K).$$
 If $\sigma(a) \in U_\tau := \{a^\prime  \in A_K^n\,\,\,\mid\,\,\,\nu_i(a^\prime) \ne 0\,\,\,\text{for every}\,\,\,i\}$ then the image $\tau(\sigma(a))$ is defined and equal to $(\tau \sigma)(a)$ where
 $$\tau \sigma =  \Big(\frac{\mu_1\big(\frac{\phi_1}{\psi_1}, \ldots, \frac{\phi_n}{\psi_n}\big)}{\nu_1\big(\frac{\phi_1}{\psi_1}, \frac{\phi_2}{\psi_2}, \ldots, \frac{\phi_n}{\psi_n}\big)}, \ldots, \frac{\mu_n\big(\frac{\phi_1}{\psi_1}, \frac{\phi_2}{\psi_2}, \ldots, \frac{\phi_n}{\psi_n}\big)}{\nu_n\big(\frac{\phi_1}{\psi_1}, \frac{\phi_2}{\psi_2}, \ldots, \frac{\phi_n}{\psi_n}\big)}\Big).$$
 Now let $\Gamma\leq\Cr_n(K)$. For every $x \in A_K^n$ we put $\Gamma(x):= \{\sigma \in \Gamma\,\,\mid\,\, x \in U_\sigma\}$. Obviously,
 conditions i.-iii. hold for $\Gamma$. However,  the condition iv. does not necessarily  hold. For instance, let $n = 2$ and $\Gamma = \langle \sigma = (x_1, \frac{x_1 - 1}{x_2})\rangle $. Then $\sigma^{-1} =\sigma$ and $\sigma^{-1}\notin \Gamma(\sigma ((1, 1)))$.
 Below we will consider some subgroups of Cremona  group $\Cr_n(K)$ which act partially on the  affine space $A_K^n$.

 \section{The decomposition $G = NU^-UN$}
We have the decomposition
\begin{equation}
\label{equaffffs1}
G = N U^- U N.
\end{equation}
Indeed,  for every Bruhat cell $B\dot wB$ we have
$B\dot w B = U_w \dot w T U$
where $U_w$ is the product (in any fixed order) of the root subgrooups $X_\alpha$
such that  $\alpha \in R^+$ and
 $w^{-1}(\alpha)\in R^-$. Hence
 $\dot w^{-1}U_w \dot w \leq U^-$  and we have$$B\dot w B \leq \dot w \dot w^{-1} U_w \dot w TU = w\underbrace{(\dot w^{-1} U_w \dot w)}_{\leq U^-} UT \leq \dot w U^- UT\leq \dot w T  U^-UN \leq NU^-UN.$$

 Put
\begin{equation}
\label{equa1d11}
\Ue = U^-U,\,\, \Ue_G = U^-TU.
\end{equation}
Then $\Ue_G$  is the Big Gauss cell of $G$ which corresponds to the Borel subgroup $B$ and $\Ue\approx A_K^m$ where $m = \dim G - \dim T = \mid R\mid$.

\subsection{The equations which define  $\bf \Ue= U^-U$}

$\,$

For a dominant   weight $\lambda: T\rightarrow K^*$ there is a regular function $\delta_\lambda$ on $G$ such that the restriction of $\delta_\lambda$ on $T$ coincides with $\lambda$ (see, for instance, \cite{EG}). We recall the construction of $\delta_\lambda$. Let
$V_\lambda$ be the irreducible $G$-module with the highest weight $\lambda$. Further, let $\mathfrak{B} = \{e_1, \ldots, e_d\}$ be the basis which consists of weight vectors of $T$ where $e_1$ corresponds to $\lambda$,  and let $\rho_\lambda: G\rightarrow \GL_n(K)$ be the matrix representation which corresponds to the basis $\mathfrak{B}$. The regular function $\delta_\lambda: G\rightarrow K$ which is defined by the formula
$\delta_\lambda(g) := g_{11}$, where $g_{11}$ is  $(1,1)$-entry of the matrix $\rho_\lambda(g)$,  satisfies the following condition:
$$ \delta_\lambda(vtu) = \lambda(t)\,\,\,\text{ for every}\,\, t \in T\,\,\text{ and for every}\,\, v \in U^-, u \in U.$$

Further, let $\delta_{\lambda_1}, \ldots, \delta_{\lambda_r}$ be regular functions on $G$  that correspond to the  fundamental weights $\lambda_1, \ldots, \lambda_r$. Then the Big Gauss cell $\Ue_G$ is defined by the inequalities
\begin{equation}
\label{equaddddfs1s11}
g \in \Ue_G \Leftrightarrow \delta_{\lambda_i} (g) \ne 0\,\,\,\text{for every}\,\,\,i  = 1, \ldots, r.
\end{equation}
The closed subset $\Ue\subset G$ is defined by equations
\begin{equation}
\label{equaddddfs1s11}
g \in \Ue \Leftrightarrow  \delta_{\lambda_i} (g)  = 1\,\,\,\text{for every}\,\,\,i  = 1, \ldots, r.
\end{equation}
\begin{remark}
Note, that in the case $G = \SL_{r+1}(K)$ the value  $\delta_{\lambda_i}(g)$ is  the  principal $i^{th}$-minor of the matrix $g \in \SL_{r+1}(K)$.
\end{remark}

\subsection{The rational map $\delta^*: G \rightarrow T$}

 $\,$

 Consider the regular  map
 $$\delta: G\rightarrow A^r_K$$
 which is defined by the formula
 $$\delta(g) = (\delta_{\lambda_1}(g), \ldots, \delta_{\lambda_r}(g)) .$$
 The definition of $\delta$ imples that
 \begin{equation}
 \label{equahhhhhjhhhhjhhhjks}
 \delta(vtu) = \delta (t)\,\,\,\,\text{for every}\,\,\,v \in U^-, t \in T,  u \in U
 \end{equation}
 %(indeed, if $\rho_{\lambda_i}: G\rightarrow \GL(V_{\lambda_i})$ is the corresponding representation then the matrices $\bar v, \bar u$ which corresponds to $\rho_{\lambda_i}(v),\rho_{\lambda_i}(u) $ in the basis $\{e_1, \ldots, e_d\}$ are lower and upper triangular unipotent matrices and therefore the element corresponding to $(1,1)$-entry of any matrix $g\in \GL_n(K)$ is not changed after multiplication $\bar v g \bar u$).

 The restriction of the map $\delta$ on $T$ gives us an isomorphism
 $$T\stackrel{\delta}{\approx}(A_K^r)^*:= \{(a_1, \ldots, a_r)\,\,\mid\,\, a_i \ne 0\,\,\,\text{for every}\,\,\,i\}\approx K^{* r}.$$

 We will identify the group $G$ with the closed subgroup of $\SL_n(K)$ for some $n$ where the fixed torus $T$ is a subgroup of the group of diagonal matrices of $\SL_n(K)$. Thus, if $t \in T$ then
 $$t = \di(t_1, t_2, \ldots, t_n)\,\,\,\text{for some}\,\,\,t_i \in K.$$
 Let $$\kappa: (A^r_K)^* \rightarrow T$$ be the regular map such that
 $$\kappa\circ \delta (t) = t\,\,\,\text{for every}\,\,\ t \in T.$$
 Since the restriction $\delta_{\mid\, T}: T\rightarrow (A^r)^*$ is a rational homomorphism of  the torus $T$ (recall, that $\delta(t) = (\delta_{\lambda_1}(t), \ldots, \delta_{\lambda_r}(t)) = (\lambda_1(t), \ldots, \lambda_r(t))$ for every $t \in T$), then $\kappa$ is also a rational homomorphism of the torus $ (A^r)^*$. Hence
 \begin{equation}
 \label{equahhfffhs1}
 \kappa((a_1, \ldots, a_r)) = (\prod_{i=1}^r a_i^{z_{1i}}, \ldots, \prod_{i =1}^r a_i^{z_{ni}})
  \end{equation}
  for some $z_{ij}\in \Z$. Since $\det \di (t_1, \ldots, t_n) = 1$  we have
  \begin{equation}
  \label{equahhhjjhjhhhhhh21hs1}
  \sum_{j =1}^n z_{j i} = 0\,\,\text{for every}\,\, i  = 1, \ldots, r.
  \end{equation}

  Thus  we may consider the map $\kappa$ as a rational map
 $$\kappa: A^r_K\rightarrow A^n_K$$
  which is defined by  formula \ref{equahhfffhs1}. Then the map $\kappa$  is regular at the point $(a_1, \ldots, a_r)$  if and only if  $(a_1, \ldots, a_r) \in (A_K^r)^*$ (see, \ref{equahhhjjhjhhhhhh21hs1}).

 \bigskip

 {\bf Example.}  Let $G = \SL_{r+1}(K)$. Let $t = \di (t_1, \ldots, t_r, \frac{1}{t_1t_2\ldots t_r}) \in T$. Then $\delta(t) = (t_1, t_1t_2, \ldots, t_1t_2 \ldots t_r)$ and therefore $\kappa((a_1, \ldots a_r)) = (a_1, \frac{a_2}{a_1}, \ldots, \frac{a_r}{a_{r-1}}, \frac{1}{a_r})$.

\bigskip

Now we define the rational map
$$\delta^* = \kappa\circ\delta: G\rightarrow A^n_K.$$
This map is regular only at points of the Big Gauss cell $\Ue_G$ and the image $\Imm \delta^*$ is isomorphic to $T$. We will identify $\Imm \delta^*$ with the torus $T$.
It follows directly from the definition
\begin{equation}
\label{equahhjhhjffs1}
\delta^*(t) = t\,\,\,\,\text{for every}\,\,\, t \in T.
\end{equation}
From \ref{equahhhhhjhhhhjhhhjks} we get
\begin{equation}
\label{equajhjhhhhjfs1}
\delta^*( t_1vt ut_2) = t_1 t t_2\,\,\,\text{for every}\,\,\,\,t_1, t_2, t \in T,\,\,v \in U^-,\,\,\,u \in U.
\end{equation}

\subsection{The rational map ${\bf w}_{\dot w_1, \,\dot w_2}:  \Ue\rightarrow \Ue$}

$\,$

Let $\omega: G\rightarrow G$ be an isomorphism of the affine variety $G$. Recall, that the closed subset $\Ue = U^-U$ we consider also as the affine variety $A_K^m \approx A^{\frac{m}{2}}_K \times A^{\frac{m}{2}}_K$.
%Let $i: \Ue\rightarrow G$ be the natural embedding and let $\varsigma: T\times G \rightarrow G$ be the regular map defined by the formula $\varsigma((t, \mathfrak u)) = t^{-1}\mathfrak u$ where $t \in T, \mathfrak u \in G$.

Consider the rational map
%$${\bf w}_\omega : \Ue \stackrel{i}{\rightarrow}G \stackrel{\omega}{\rightarrow} G\stackrel{\delta^* \times \iota} {\rightarrow}T \times \Ue_G\stackrel{\varsigma}{\rightarrow} G.$$
${\bf w}_\omega: G\rightarrow G$
which is given by the formula $${\bf w}_\omega(g) = \big(\delta^*(\omega(g))\big)^{-1} \omega(g).$$
Let  map  ${\bf w}_\omega$ be regular at a point  $g \in \Ue$. Then  the element $\omega(g)$ belongs to the Big Gauss cell $\Ue_G \subset G$ and therefore $\omega(g) = v t u$ for some $v \in U^-, t \in T, u\in U$. Hence $\delta^* (\omega(g)) = t$ (see, \ref{equajhjhhhhjfs1}) and therefore
$${\bf w}_\omega (g)  = t^{-1} vt u = (t^{-1}vt) t^{-1}t u  =  (t^{-1}vt)  u\in \Ue,$$
Let $$\Ue_\omega:= \{ u \in \Ue\,\,\,\mid\,\,\, {\bf w}_\omega\,\,\,\text{is regular at the point}\,\,u\}.$$

 Consider the restriction of ${\bf w}_\omega$ on the closed subset $\Ue$.  If $\Ue_\omega \ne\emptyset$ then we may and we will consider ${\bf w}_\omega$ as  a rational map  ${\bf w}_\omega: \Ue\rightarrow \Ue$. Since $\omega$ is an isomorphism of varieties we have from the definition of ${\bf w}_\omega$ the equivalence
$$g \in \Ue_\omega \Leftrightarrow \omega(g) \in \Ue_G.$$
Hence
\begin{equation}
\label{equahhjhhhjkjjjkjjjks1}
\Ue_\omega = \Ue \cap \omega^{-1} (\Ue_G).
\end{equation}

\bigskip

Now we consider the map $\omega: \Ue\rightarrow \Ue$ which is defined by left-right multiplication of elements from subgroup $N$. Namely,
let $w_1, w_2\in W$ and let $\dot w_1, \dot w_2 \in N$ be fixed preimages. Further,  let $\omega: G\rightarrow G$ be the isomorphism (as a variety)  which is given by the formula $\omega(g) = \dot w_1g \dot w_2$. Then we put $${\bf w}_{\dot w_1,\, \dot w_2} :={\bf w}_\omega,\,\,\,\Ue_{w_1,\, w_2}:= \Ue_\omega.$$
The set $\Ue_{w_1, \,w_2} = \Ue \cap \dot w_1^{-1}\Ue_G \dot w_2^{-1}$ is an intersection of a closed  and an open subsets of $G$ and therefore it is an open subset in $\Ue$. Since $\Ue_G = U^-TU$ the set $\dot w_1^{-1}\Ue_G \dot w_2^{-1}$ does not depend on the choice  of preimages $\dot w_1, \dot w_2$ of $w_1, w_2$. Hence the set $\Ue_{w_1, \, w_2}$ does not also depend on the choice of preimages $\dot w_1, \dot w_2$.
\begin{lemma}
\label{lemff21ffdfffs1}
$$\Ue_{w_1, \,w_2}\ne \emptyset.$$
\end{lemma}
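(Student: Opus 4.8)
The plan is to exhibit an explicit point of $\Ue$ that lands in the Big Gauss cell $\Ue_G$ after the two-sided multiplication $g\mapsto \dot w_1 g\dot w_2$; by \eqref{equahhjhhhjkjjjkjjjks1} (in the form $\Ue_{w_1,w_2}=\Ue\cap\dot w_1^{-1}\Ue_G\dot w_2^{-1}$) this is exactly what is needed. The natural candidate is the identity element $1\in\Ue$ (it lies in $U^-U$ as $1\cdot 1$), for which the condition becomes $\dot w_1\dot w_2\in\Ue_G$. This is false in general, so the first real step is to replace $1$ by a suitable element of a torus-translate: I would instead look for $v\in U^-$, $u\in U$ with $\dot w_1\,vu\,\dot w_2\in U^-TU$, which is the assertion that the product $\dot w_1 vu\dot w_2$ has all the principal ``generalized minors'' $\delta_{\lambda_i}$ nonzero, by \eqref{equaddddfs1s11} and \eqref{equaddddfs1s11}.

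The key step is to produce such $v,u$ using the Bruhat decomposition itself. Consider the element $\dot w_1^{-1}\dot w_2^{-1}$; it lies in some Bruhat cell, but more usefully, pick any element $h$ of the Big Gauss cell $\Ue_G=U^-TU$ — for instance $h$ in a generic position — and set $vu$ to be determined by solving $\dot w_1\, x\,\dot w_2 = h$, i.e. $x=\dot w_1^{-1}h\dot w_2^{-1}$. The point is then to check that this $x$ can be chosen to lie in $\Ue=U^-U$, equivalently that $\delta_{\lambda_i}(x)=1$ for all $i$ while simultaneously $x\in U^-TU$. Since the Big Gauss cell $\Ue_G$ is an open dense subset of $G$ and $\dot w_1^{-1}\Ue_G\dot w_2^{-1}$ is also open dense (it is the image of a dense open set under an isomorphism of the variety $G$), the two open dense sets $\Ue_G$ and $\dot w_1^{-1}\Ue_G\dot w_2^{-1}$ intersect; pick $y$ in the intersection, write $y=v't'u'\in U^-TU$, and set $\mathfrak u:=(t')^{-1}\cdot y\cdot$(nothing on the right) — more precisely $\mathfrak u = (t')^{-1}v't'\,u'\in U^-U=\Ue$ obtained exactly as in the computation of ${\bf w}_\omega$ in Section 2.3. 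One checks $\mathfrak u$ still lies in $\dot w_1^{-1}\Ue_G\dot w_2^{-1}$ because multiplying on the left by $(t')^{-1}\in T$ preserves $\Ue_G=U^-TU$, hence preserves $\dot w_1^{-1}\Ue_G\dot w_2^{-1}$ (as $\dot w_1^{-1}T\Ue_G=\dot w_1^{-1}\Ue_G$). Thus $\mathfrak u\in\Ue\cap\dot w_1^{-1}\Ue_G\dot w_2^{-1}=\Ue_{w_1,w_2}$.

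Concretely, the cleanest route is: $\Ue_G$ is open dense in $G$ (it is a nonempty open set, and $G$ is irreducible), so its translate $\dot w_1^{-1}\Ue_G\dot w_2^{-1}$ is open dense as well; two nonempty open subsets of the irreducible variety $G$ meet, so $W:=\Ue_G\cap\dot w_1^{-1}\Ue_G\dot w_2^{-1}\ne\emptyset$. For any $g\in W$ we have $g=vtu$ with $v\in U^-,t\in T,u\in U$, and then $\mathfrak u:=(t^{-1}vt)u\in\Ue$; since $T\cdot\Ue_G=\Ue_G$ we get $\mathfrak u=t^{-1}g\in t^{-1}\dot w_1^{-1}\Ue_G\dot w_2^{-1}=\dot w_1^{-1}(\dot w_1 t^{-1}\dot w_1^{-1})\Ue_G\dot w_2^{-1}\subseteq\dot w_1^{-1}\Ue_G\dot w_2^{-1}$, the last inclusion because $\dot w_1 t^{-1}\dot w_1^{-1}\in T$ and $T\Ue_G=\Ue_G$. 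Hence $\mathfrak u\in\Ue_{w_1,w_2}$, which is therefore nonempty.

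The main obstacle I anticipate is purely bookkeeping: making sure the normalization step ``divide by $t$'' genuinely keeps us inside the translated Gauss cell, i.e. verifying $T\Ue_G=\Ue_G$ and that conjugation of $T$ by $\dot w_1$ stays in $T$ (both immediate, since $\dot w_1$ normalizes $T$ and $U^-TU$ absorbs left multiplication by $T$). No hard geometry is needed beyond irreducibility of $G$ and density of the Big Gauss cell, both standard and already implicit in the paper's setup.
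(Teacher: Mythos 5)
Your final ``cleanest route'' paragraph is a correct proof and is essentially the paper's own argument: the paper likewise intersects the (nonempty, open, hence dense) Big Gauss cell with its $\dot w_1,\dot w_2$-translate and then strips off the torus factor by a left multiplication by an element of $T$, using that $\dot w_1$ normalizes $T$ and that $T\,\Ue_G=\Ue_G$. No substantive difference; only the preliminary meandering (the attempt with the identity element) is superfluous.
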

 \begin{proof}
 Since $\Ue_G\ne \emptyset $ is an open subset in $G$ we have $\dot w_1 \Ue_G \dot w_2 \cap \Ue_G\ne \emptyset$. Let $$\dot w_1 vtu \dot w_2 = v^\prime t^\prime u^\prime \in \dot w_1 \Ue_G \dot w_2 \cap \Ue_G$$ where $v, v^\prime \in U^-, t, t^\prime \in T, u, u^\prime \in U$. Let $s = \dot w_1 t^{-1} \dot w_1^{-1}$. Then $s \in T$ and
 $$s \dot w_1 vtu \dot w_2 = (\dot w_1 t^{-1} \dot w_1^{-1})\dot w_1 vtu \dot w_2  = \dot w_1\underbrace{\underbrace{( t^{-1}vt)}_{\in U^-} u}_{\in \Ue}\dot w_2 =$$
 $$ = sv^\prime t^\prime u^\prime =  \underbrace{(sv^\prime s^{-1})}_{\in U^-}\underbrace{(st^\prime)}_{\in T} u^\prime \in \Ue_G\Rightarrow
 \Ue \cap \dot w_1^{-1}\Ue_G \dot w_2^{-1}\ne\emptyset.$$

 \end{proof}
Thus we have the rational map
$${\bf w}_{\dot w_1, \, \dot w_2}: \Ue\rightarrow \Ue$$
such that the set of points $\mathfrak u \in \Ue$ where ${\bf w}_{\dot w_1, \,\dot w_2}$ is regular coincides with a  non-empty open subset $\Ue_{w_1, \, w_2} = \Ue \cap \dot w_1^{-1}\Ue_G \dot w_2^{-1}$.
\begin{prop}
\label{lemhhjhhjfs1}
The map $$ {\bf w}_{\dot w_1, \,\dot w_2}: \Ue_{w_1, w_2}\rightarrow {\bf w}_{\dot w_1,\, \dot w_2}(\Ue_{w_1,\, w_2})$$
is an isomorphism of  open subsets of $\Ue$ and
${\bf w}_{\dot w_1,\, \dot w_2}(\Ue_{w_1,\, w_2}) = \Ue_{w^{-1}_1, w_2^{-1}}$.
\end{prop}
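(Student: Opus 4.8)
The plan is to show that $\mathbf{w}_{\dot w_2^{-1},\,\dot w_1^{-1}}$ is the inverse rational map to $\mathbf{w}_{\dot w_1,\,\dot w_2}$, as maps $\Ue\dashrightarrow\Ue$, and then to identify the precise open sets on which the composition in each direction is the identity. First I would recall the key formula: if $\omega(g)=\dot w_1 g\dot w_2$ and $g\in\Ue_{w_1,w_2}$, then writing $\omega(g)=vtu$ with $v\in U^-$, $t\in T$, $u\in U$, we computed $\mathbf{w}_{\dot w_1,\dot w_2}(g)=(t^{-1}vt)u\in\Ue$, while $\delta^*(\omega(g))=t$. In particular $\dot w_1 g\dot w_2 = t\cdot\mathbf{w}_{\dot w_1,\dot w_2}(g)$, equivalently
\begin{equation}
\label{equakeyid}
\dot w_1\, g\, \dot w_2 = \delta^*\!\big(\dot w_1 g\dot w_2\big)\cdot \mathbf{w}_{\dot w_1,\dot w_2}(g).
\end{equation}
So on $\Ue_{w_1,w_2}$ we have $g = \dot w_1^{-1}\, t\, \mathbf{w}_{\dot w_1,\dot w_2}(g)\, \dot w_2^{-1}$ with $t=\delta^*(\dot w_1 g\dot w_2)\in T$.

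Next I would apply $\mathbf{w}_{\dot w_1^{-1},\,\dot w_2^{-1}}$ — wait, the correct pairing is $\mathbf{w}_{\dot w_2^{-1},\,\dot w_1^{-1}}$ since the target parameter is $(w_1^{-1},w_2^{-1})$; let me set $\mathfrak u := \mathbf{w}_{\dot w_1,\dot w_2}(g)$ and apply $\mathbf{w}_{(\dot w_1)^{-1},(\dot w_2)^{-1}}$ to $\mathfrak u$. Its defining isomorphism is $\omega'(h)=\dot w_1^{-1}h\dot w_2^{-1}$, so $\omega'(\mathfrak u)=\dot w_1^{-1}\mathfrak u\dot w_2^{-1}= t^{-1} g$ by the displayed rearrangement of \eqref{equakeyid}. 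Since $g\in\Ue\subset\Ue_G$ and $T$ normalizes nothing but scales within $\Ue_G$, we get $t^{-1}g\in\Ue_G$, hence $\mathfrak u\in\Ue_{w_1^{-1},w_2^{-1}}$; moreover by \eqref{equajhjhhhhjfs1} $\delta^*(\omega'(\mathfrak u))=\delta^*(t^{-1}g)=t^{-1}\delta^*(g)=t^{-1}$ (using $g\in\Ue$, so $\delta^*(g)=e$ — this follows from \eqref{equaddddfs1s11} and \eqref{equahhjhhjffs1}, since $\delta(g)=(1,\dots,1)$ and $\kappa$ sends that to $e$). Therefore $\mathbf{w}_{\dot w_1^{-1},\dot w_2^{-1}}(\mathfrak u) = \big(\delta^*(\omega'(\mathfrak u))\big)^{-1}\omega'(\mathfrak u) = t\cdot t^{-1} g = g$. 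This shows $\mathbf{w}_{\dot w_1,\dot w_2}$ maps $\Ue_{w_1,w_2}$ into $\Ue_{w_1^{-1},w_2^{-1}}$, and that the composite $\mathbf{w}_{\dot w_1^{-1},\dot w_2^{-1}}\circ\mathbf{w}_{\dot w_1,\dot w_2}$ is the identity on $\Ue_{w_1,w_2}$.

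By the symmetric argument, exchanging $(w_1,w_2)$ with $(w_1^{-1},w_2^{-1})$ and using that $(\dot w_i^{-1})$ is a preimage of $w_i^{-1}$, the map $\mathbf{w}_{\dot w_1^{-1},\dot w_2^{-1}}$ carries $\Ue_{w_1^{-1},w_2^{-1}}$ into $\Ue_{w_1,w_2}$, and $\mathbf{w}_{\dot w_1,\dot w_2}\circ\mathbf{w}_{\dot w_1^{-1},\dot w_2^{-1}}$ is the identity on $\Ue_{w_1^{-1},w_2^{-1}}$. Combining the two inclusions with the two identities: $\mathbf{w}_{\dot w_1,\dot w_2}(\Ue_{w_1,w_2})\subseteq\Ue_{w_1^{-1},w_2^{-1}}$ and $\mathbf{w}_{\dot w_1^{-1},\dot w_2^{-1}}(\Ue_{w_1^{-1},w_2^{-1}})\subseteq\Ue_{w_1,w_2}$, each followed by the other returning to the start, forces both inclusions to be equalities and both maps to be mutually inverse morphisms of the open subvarieties $\Ue_{w_1,w_2}$ and $\Ue_{w_1^{-1},w_2^{-1}}$ of $\Ue$. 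This gives simultaneously that $\mathbf{w}_{\dot w_1,\dot w_2}\colon\Ue_{w_1,w_2}\to\mathbf{w}_{\dot w_1,\dot w_2}(\Ue_{w_1,w_2})$ is an isomorphism and that its image is exactly $\Ue_{w_1^{-1},w_2^{-1}}$, as claimed.

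The main obstacle I anticipate is bookkeeping rather than conceptual depth: one must be careful that $\mathbf{w}_\omega$ is only a rational map, so "the composite is the identity" must be asserted on the honest locus of regularity, and I need the clean fact that $g\in\Ue\Rightarrow\delta^*(g)=e$ to make the torus factors cancel. I would also double-check the handedness of the index swap — that $\omega^{-1}$ for $\omega(g)=\dot w_1 g\dot w_2$ is $h\mapsto\dot w_1^{-1}h\dot w_2^{-1}$, which pairs with parameters $(w_1^{-1},w_2^{-1})$ in that order, matching the statement — and that Lemma \ref{lemff21ffdfffs1} guarantees all four sets $\Ue_{w_1,w_2}$, $\Ue_{w_1^{-1},w_2^{-1}}$ are nonempty so the rational maps are genuinely defined on dense opens. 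Everything else is the identity $\dot w_1 g\dot w_2 = \delta^*(\dot w_1 g\dot w_2)\,\mathbf{w}_{\dot w_1,\dot w_2}(g)$ together with \eqref{equahhjhhhjkjjjkjjjks1} and the $T$-equivariance \eqref{equajhjhhhhjfs1} of $\delta^*$.
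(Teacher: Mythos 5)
Your proof follows essentially the same route as the paper's: show that $\mathbf{w}_{\dot w_1^{-1},\,\dot w_2^{-1}}\circ\mathbf{w}_{\dot w_1,\,\dot w_2}$ is the identity on $\Ue_{w_1,w_2}$, apply the symmetric statement for $(w_1^{-1},w_2^{-1})$, and combine the two inclusions to get mutual inverseness and the identification of the image with $\Ue_{w_1^{-1},w_2^{-1}}$. One small slip: from $\dot w_1 g\dot w_2 = t\,\mathfrak u$ one gets $\dot w_1^{-1}\mathfrak u\,\dot w_2^{-1} = (\dot w_1^{-1}t^{-1}\dot w_1)\,g$, not $t^{-1}g$ (conjugation by $\dot w_1$ moves $t$ by the Weyl action, exactly the element $s$ the paper introduces); since this factor still lies in $T$ and cancels through $\delta^*$ just as in your computation, the conclusion is unaffected.
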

\begin{proof}
Let $\mathfrak u\in \Ue_{w_1, w_2} $. Further, let
\begin{equation}
\label{equahhhfhs1}
\dot w_1\mathfrak u \dot w_2 = v t u
\end{equation}
 for some $v\in U^-, u \in U, t\in T$. Then
$$\mathfrak u^\prime = {\bf w}_{\dot w_1, \dot w_2}(\mathfrak u) = t^{-1}v t u.$$
Further,
$${\bf w}_{\dot w_1^{-1}, \,\dot w_2^{-1}}(\mathfrak u^\prime) = $$$$= \big(\delta^*(\dot w_1^{-1}\mathfrak u^\prime\dot w_2^{-1})\big)^{-1}\dot w_1^{-1}\mathfrak u^\prime \dot w_2^{-1} =\big(\delta^*(\dot w_1^{-1}\mathfrak u^\prime\dot w_2^{-1})\big)^{-1} \underbrace{(\dot w_1^{-1}t^{-1}\dot w_1)}_{:= s \in T} \underbrace{(\dot w_1^{-1}v t u\dot w_2^{-1})}_{= \mathfrak u;\, (\text{see}\,\,\,\ref{equahhhfhs1})} =$$$$= \big(\underbrace{\delta^*(s\mathfrak u)}_{= s}\big)^{-1}s\mathfrak u  = \mathfrak u.$$
Hence ${\bf w}_{\dot w_1^{-1}, \,\dot w_2^{-1}}(\mathfrak u^\prime) = \mathfrak u$ and therefore
\begin{equation}
\label{equahhhklkkhs1}
\text{the map}\,\,\,{\bf w}_{\dot w_1^{-1},\, \dot w_2^{-1}}\circ {\bf w}_{\dot w_1, \, \dot w_2}: \Ue_{w_1, \,w_2}\rightarrow \Ue_{w_1,\, w_2}\,\,\,\text{is the identity}.
\end{equation}
Since \ref{equahhhklkkhs1} holds for every pair of $\dot w_1, \dot w_2$ we have
\begin{equation}
\label{equahjhjkkkjhjkjhs1}
{\bf w}_{\dot w_1, \,\dot w_2}\circ {\bf w}_{\dot w_1^{-1}, \,\dot w_2^{-1}}:  \Ue_{w_1^{-1}, \,w_2^{-1}}\rightarrow \Ue_{w_1^{-1}, \, w_2^{-1}} \,\,\,\,\,\,\text{is the identity}.
\end{equation}
%Hence the map ${\bf w}_{\dot w_1, \,\dot w_2}:   \Ue_{w_1, w_2}\rightarrow {\bf w}_{\dot w_1, \,\dot w_2}(\Ue_{w_1, \,w_2})$ is an injection. Thus every map of the form ${\bf w}_{\dot w_1,\, \dot w_2}$ is an injection on the set $\Ue_{w_1,\, w_2}$ and therefore
%\begin{equation}
%\label{equajhfs1}
%{\bf w}_{\dot w_1^{-1},\, \dot w_2^{-1}}\,\,\,\text{ is an injection on }\,\,\,\Ue_{w_1^{-1},\, w_2^{-1}}.
%\end{equation}

 Since the map ${\bf w}_{\dot w_1^{-1}, \,\dot w_2^{-1}}$ is regular at  every point $\mathfrak u^\prime = {\bf w}_{\dot w_1, \dot w_2}(\mathfrak u) $ we have the inclusion
\begin{equation}
\label{equahhhgfs1}
{\bf w}_{\dot w_1, \, \dot w_2}(\Ue_{w_1, \,w_2}) \subset \Ue_{w_1^{-1}, \,w_2^{-1}}.
\end{equation}
Then the  inclusion \ref{equahhhgfs1} holds if we change $w_i $ for $w_i^{-1}$
\begin{equation}
\label{equajffklfffhjs1}
{\bf w}_{\dot w_1^{-1}, \,\dot w_2^{-1}}(\Ue_{w^{-1}_1, \,w^{-1}_2}) \subset \Ue_{w_1, \, w_2}.
\end{equation}
We may consider  maps
$$\Ue_{w_1, \,w_2}\stackrel{{\bf w}_{\dot w_1, \, \dot w_2}}{\longrightarrow} \Ue_{w_1^{-1}, \,w_2^{-1}}\stackrel{{\bf w}_{\dot w_1^{-1}, \, \dot w_2^{-1}}}{\longrightarrow} \Ue_{w_1, \,w_2}\stackrel{{\bf w}_{\dot w_1, \, \dot w_2}}{\longrightarrow} \Ue_{w_1^{-1}, \,w_2^{-1}}$$
(see,  \ref{equahhhgfs1} and \ref{equajffklfffhjs1}). Thus, \ref{equahhhklkkhs1} and \ref{equajffklfffhjs1}  imply that
$$ {\bf w}_{\dot w_1, \,\dot w_2}: \Ue_{w_1, w_2}\rightarrow {\bf w}_{\dot w_1,\, \dot w_2}(\Ue_{w_1,\, w_2})$$
is an isomorphism of  open subsets of $\Ue$ and
${\bf w}_{\dot w_1,\, \dot w_2}(\Ue_{w_1,\, w_2}) = \Ue_{w^{-1}_1, w_2^{-1}}$.

\end{proof}

Now we get
\begin{cor}
\label{corhhjhjjhs1}
${\bf w}_{\dot w_1, \,\dot w_2} \in \Cr_m(K)$ and ${\bf w}_{\dot w_1, \, \dot w_2}^{-1} = {\bf w}_{\dot w^{-1}_1, \, \dot w_2^{-1}}$.
\end{cor}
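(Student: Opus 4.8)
The plan is to deduce Corollary \ref{corhhjhjjhs1} directly from Proposition \ref{lemhhjhhjfs1}, which has already done essentially all the work. Recall that $\Cr_m(K)$ is the group of birational self-maps of $A_K^m$, equivalently the automorphism group of the field $K(x_1,\dots,x_m)$; so to show ${\bf w}_{\dot w_1,\dot w_2}\in \Cr_m(K)$ it suffices to exhibit it as a rational map $\Ue\dashrightarrow\Ue$ that is a birational isomorphism, i.e. an isomorphism between two nonempty open subsets of $\Ue\approx A_K^m$. But that is exactly the content of Proposition \ref{lemhhjhhjfs1}: the map restricts to an isomorphism $\Ue_{w_1,w_2}\xrightarrow{\ \sim\ }\Ue_{w_1^{-1},w_2^{-1}}$ of open subsets of $\Ue$, and by Lemma \ref{lemff21ffdfffs1} the source $\Ue_{w_1,w_2}$ is nonempty (and dense, being open in the irreducible variety $\Ue$). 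Hence ${\bf w}_{\dot w_1,\dot w_2}$ defines an element of $\Cr_m(K)$.

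For the second assertion, I would again read off the inverse from Proposition \ref{lemhhjhhjfs1} and its proof. Formulas \ref{equahhhklkkhs1} and \ref{equahjhjkkkjhjkjhs1} state precisely that ${\bf w}_{\dot w_1^{-1},\dot w_2^{-1}}\circ{\bf w}_{\dot w_1,\dot w_2}$ is the identity on $\Ue_{w_1,w_2}$ and ${\bf w}_{\dot w_1,\dot w_2}\circ{\bf w}_{\dot w_1^{-1},\dot w_2^{-1}}$ is the identity on $\Ue_{w_1^{-1},w_2^{-1}}$. Since each of these composites agrees with the identity on a nonempty open (hence dense) subset of $\Ue$, the two rational maps ${\bf w}_{\dot w_1,\dot w_2}$ and ${\bf w}_{\dot w_1^{-1},\dot w_2^{-1}}$ are mutually inverse as elements of $\Cr_m(K)$ — two birational maps whose composition is the identity on a dense open set are inverse in the group. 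This gives ${\bf w}_{\dot w_1,\dot w_2}^{-1}={\bf w}_{\dot w_1^{-1},\dot w_2^{-1}}$.

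The only point requiring a word of care — and the closest thing to an obstacle — is the passage from "equality on a dense open subset" to "equality in $\Cr_m(K)$", i.e. checking that the identifications are being made in the right ambient group. One must note that $\Ue$ is irreducible (it is isomorphic to the affine space $A_K^m$ by \ref{equa1d11} and the surrounding discussion), so that any nonempty open subset is dense, and that a rational map $\Ue\dashrightarrow\Ue$ which is invertible on a dense open set is by definition an element of $\Cr_m(K)=\mathrm{Bir}(A_K^m)$; here one uses that $\Ue$ carries a fixed isomorphism with $A_K^m$, so "rational map $\Ue\dashrightarrow\Ue$" and "element of $\Cr_m(K)$" are literally the same notion. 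With that observed, everything is immediate from the already-proven Lemma \ref{lemff21ffdfffs1} and Proposition \ref{lemhhjhhjfs1}, and no further computation is needed.
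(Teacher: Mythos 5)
Your proposal is correct and follows essentially the same route as the paper: the paper states Corollary \ref{corhhjhjjhs1} as an immediate consequence of Proposition \ref{lemhhjhhjfs1} (and the identities \ref{equahhhklkkhs1}, \ref{equahjhjkkkjhjkjhs1} in its proof), which is exactly the deduction you carry out. Your only addition is to spell out the standard point that $\Ue\approx A_K^m$ is irreducible, so agreement of the composites with the identity on the nonempty open sets $\Ue_{w_1,w_2}$ and $\Ue_{w_1^{-1},w_2^{-1}}$ (nonempty by Lemma \ref{lemff21ffdfffs1}) suffices to identify the maps as mutually inverse elements of $\Cr_m(K)$ --- a detail the paper leaves implicit.
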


\begin{remark}
\label{remjjjhffks1}
Note, that ${\bf w}_{\dot w^{-1}_1, \, \dot w_2^{-1}}$ is the map that corresponds to $\dot w^{-1}_1, \dot w_2^{-1}$, that is, we take here  fixed preimages $\dot w_1, \dot w_2$ of $w_1, w_2$ and then take the inverse elements of these preimeges. If we take  other preimages $\dot{ (w_1^{-1})}, \dot {(w_2^{-1})}$ we get the corresponding map ${\bf w}_{\dot{( w^{-1}_1)}, \,\dot {(w_2^{-1})}}$ which differs from ${\bf w}_{\dot w_1, \,\dot w_2}^{-1}$ on a multiplier from $T$.
\end{remark}

\section{Definition of the  group $\Ne$}

Now we define some  group $\Ne \leq Cr_m(K)$ which partially acts on $\Ue\approx A^m$.

\begin{lemma}
\label{lemffffdfffs1}
Let $\mathfrak u \in \Ue$ and  let $t, s \in T$. Then $$t\mathfrak u s \in \Ue \Leftrightarrow s = t^{-1}.$$
\end{lemma}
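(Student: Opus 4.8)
The plan is to use the defining equations of $\Ue$ in terms of the fundamental-weight functions $\delta_{\lambda_i}$, namely that $\mathfrak{g} \in \Ue$ if and only if $\delta_{\lambda_i}(\mathfrak g) = 1$ for all $i = 1, \dots, r$, together with the transformation rule $\delta_{\lambda}(vtu) = \lambda(t)$ for $v \in U^-$, $t \in T$, $u \in U$.

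First I would note the easy direction: if $s = t^{-1}$, then since $\mathfrak u \in \Ue \subset \Ue_G = U^- T U$ we can write $\mathfrak u = v u$ with $v \in U^-$, $u \in U$ (the torus component being trivial because $\delta_{\lambda_i}(\mathfrak u) = 1 = \lambda_i(1)$), and then $t \mathfrak u s = t v u t^{-1} = (t v t^{-1})(t u t^{-1})$ lies in $U^- U = \Ue$, because $U^-$ and $U$ are normalized by $T$. For the converse, suppose $t \mathfrak u s \in \Ue$. Write $\mathfrak u = v u$ as above. Then $t \mathfrak u s = t v u s = (t v t^{-1}) \cdot (ts) \cdot (s^{-1} u s)$, which exhibits $t \mathfrak u s$ in the form $v' t' u'$ with $v' = tvt^{-1} \in U^-$, $t' = ts \in T$, $u' = s^{-1} u s \in U$. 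Applying $\delta_{\lambda_i}$ and using the transformation rule gives $\delta_{\lambda_i}(t \mathfrak u s) = \lambda_i(ts)$. Since $t\mathfrak u s \in \Ue$ forces this to equal $1$ for every $i = 1, \dots, r$, and the fundamental weights $\lambda_1, \dots, \lambda_r$ generate a finite-index subgroup of the character group $X(T)$ (indeed they form a basis of the weight lattice, which contains $X(T)$ with finite index), we conclude $(ts)^k = 1$ in $T$ for a suitable positive integer $k$; but then, using that every character of $T$ is trivial on $ts$ to a power, one gets $ts = 1$, i.e. $s = t^{-1}$. Alternatively and more cleanly: the map $T \to K^{*r}$, $t \mapsto (\lambda_1(t), \dots, \lambda_r(t))$, is injective (this is exactly the isomorphism $T \stackrel{\delta}{\approx} (A_K^r)^*$ recorded in the excerpt), so $\lambda_i(ts) = 1 = \lambda_i(e)$ for all $i$ immediately yields $ts = e$.

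I expect the only point requiring a little care is the reduction from $\delta_{\lambda_i}(t\mathfrak u s) = \lambda_i(ts)$ to $ts = e$; but this is handled at once by the injectivity of $\delta|_T$ established in Section 2 (the identification $T \approx (A_K^r)^*$), so there is essentially no obstacle. I would write the argument invoking that injectivity directly rather than reproving it.

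\begin{proof}
Since $\Ue \subset \Ue_G = U^- T U$ and $\delta_{\lambda_i}(\mathfrak u) = 1 = \lambda_i(e)$ for all $i$, the injectivity of the restriction $\delta_{\mid T}\colon T \xrightarrow{\approx} (A_K^r)^*$ forces the torus component of $\mathfrak u$ to be trivial; hence $\mathfrak u = vu$ for some $v \in U^-$, $u \in U$. For any $t, s \in T$ we then have
$$t \mathfrak u s = t v u s = (tvt^{-1})\,(ts)\,(s^{-1}us),$$
with $tvt^{-1} \in U^-$, $ts \in T$, $s^{-1} u s \in U$, since $T$ normalizes both $U^-$ and $U$. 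Therefore, by \ref{equajhjhhhhjfs1} (or directly by the defining property of $\delta_{\lambda_i}$),
$$\delta_{\lambda_i}(t\mathfrak u s) = \lambda_i(ts)\qquad (i = 1, \ldots, r).$$
If $s = t^{-1}$, then $ts = e$ and the displayed factorization shows $t\mathfrak u s = (tvt^{-1})(s^{-1}us) \in U^- U = \Ue$. Conversely, if $t\mathfrak u s \in \Ue$, then $\delta_{\lambda_i}(t\mathfrak u s) = 1$ for all $i$, so $\lambda_i(ts) = 1 = \lambda_i(e)$ for all $i$; since $\delta_{\mid T}\colon T \to (A_K^r)^*$, $x \mapsto (\lambda_1(x), \ldots, \lambda_r(x))$, is injective, this gives $ts = e$, i.e. $s = t^{-1}$.
\end{proof}
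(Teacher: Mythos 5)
Your proof is correct and rests on the same key identity as the paper's, namely writing $\mathfrak u = vu$ and factoring $t\mathfrak u s = (tvt^{-1})(ts)(s^{-1}us) \in U^-\,T\,U$; the paper then concludes by citing the uniqueness of the Gauss decomposition on $\Ue_G$, while you conclude by applying the functions $\delta_{\lambda_i}$ and the injectivity of $\delta_{\mid T}\colon T\to (A_K^r)^*$, which is an equivalent justification already recorded in Section 2 (your preliminary step deducing $\mathfrak u = vu$ is redundant, since $\Ue = U^-U$ by definition, and the finite-index aside in your narrative would not by itself give $ts=e$, but your final argument avoids it).
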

\begin{proof}
Let $\mathfrak u = vu$ where $v\in U^-, u \in U$. Then
$$t\mathfrak u s = \underbrace{(t v t^{-1})}_{\in U^-} \underbrace{t s}_{\in  T}\underbrace{(s^{-1} u s)}_{\in U}\in \Ue \Leftrightarrow ts = 1$$
(recall, that the Gauss decomposition $g = vtu$ of every element $g \in \Ue_G$ is unique).
\end{proof}

 For every $s \in T$ we define the transformation of $$\mathfrak t_s: \Ue\rightarrow \Ue$$
that is defined by the formula
$$\mathfrak t_s (\mathfrak u) = s \mathfrak u s^{-1}.$$
Let
$$\mathcal T := \langle \mathfrak t_s\,\,\,\mid\,\,s \in T\rangle.$$
Then we may consider $\mathcal T$ as a subgroup of $\Cr_m(K)$. Further, we assume that we have the set of fixed preimages $\{\dot w\}_{w\in W}$. Put
$$\Ne  = \langle \mathcal T, {\bf w}_{\dot w_1, \dot w_2}\,\,\,\mid\,\,\,(w_1, w_2) \in W\times W\rangle \leq \Cr_m(K).$$
Note, that $\Ne \leq \Cr_m(K)$ is a group that  depends on the choice of preimages $\{\dot w\}_{w\in W}$. Moreover, the inverse elements
${\bf w}_{\dot w_1, \dot w_2}^{-1}$ do not necessary coincide with ${\bf w}_{\dot {(w_1^{-1})}, \dot{( w_2^{-1})}}$  (see, Remark \ref{remjjjhffks1}). More precisely
\begin{lemma}
\label{lemhhjfjjhfjhjjhfs1}
Let $\{\dot w\}_{w\in W}$ be a fixed set of preimages in $N$ elements of the Weyl group. Further, let
$$\omega_1 = w_1^\prime w_1, \omega_2 = w_2 w_2^\prime$$
Then there exists an element $\mathfrak t_s \in \mathcal T$ such that
$${\bf w}_{\dot w_1^\prime, \, \dot w_2^\prime }{\bf w}_{\dot w_1, \, \dot w_2} = \mathfrak t_s {\bf w}_{\dot \omega_1,\,  \dot \omega_2}.$$
\end{lemma}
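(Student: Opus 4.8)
The plan is to unwind the definition of the maps $\mathbf{w}_{\dot w_1,\dot w_2}$ via the rational map $\delta^*$ and compare the composition $\mathbf{w}_{\dot w_1',\dot w_2'}\circ\mathbf{w}_{\dot w_1,\dot w_2}$ with $\mathbf{w}_{\dot\omega_1,\dot\omega_2}$ directly, tracking how the torus parts accumulate. First I would fix a generic point $\mathfrak u\in\Ue$ lying in the (nonempty, by Lemma \ref{lemff21ffdfffs1}) open subset where all maps in sight are regular — i.e.\ $\mathfrak u\in\Ue_{w_1,w_2}$, $\mathbf{w}_{\dot w_1,\dot w_2}(\mathfrak u)\in\Ue_{w_1',w_2'}$, and $\mathfrak u\in\Ue_{\omega_1,\omega_2}$ — and write $\dot w_1\mathfrak u\dot w_2=vtu$ with $v\in U^-$, $t\in T$, $u\in U$ as in \eqref{equahhhfhs1}. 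Then $\mathfrak u':=\mathbf{w}_{\dot w_1,\dot w_2}(\mathfrak u)=t^{-1}vtu\in\Ue$, and by \eqref{equajhjhhhhjfs1} we have $\delta^*(\dot w_1'\mathfrak u'\dot w_2')=\delta^*\big(\dot w_1'\,t^{-1}vtu\,\dot w_2'\big)$.

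The key computation is to rewrite $\dot w_1'\mathfrak u'\dot w_2'$ in terms of $\dot\omega_1\mathfrak u\dot\omega_2$. Since $\dot\omega_1$ is \emph{a} preimage of $\omega_1=w_1'w_1$, there is $s_1\in T$ with $\dot w_1'\dot w_1=s_1\dot\omega_1$; similarly $\dot w_2\dot w_2'=\dot\omega_2 s_2$ for some $s_2\in T$ (using that $\omega_2=w_2w_2'$, so its preimage $\dot w_2\dot w_2'$ differs from $\dot\omega_2$ by a torus element, which I may place on either side). Now
\[
\dot w_1'\mathfrak u'\dot w_2'=\dot w_1'\,t^{-1}vtu\,\dot w_2'=\dot w_1'\,t^{-1}(\dot w_1^{-1}\dot w_1)vtu(\dot w_2\dot w_2^{-1})\,\dot w_2'
=\big(\dot w_1'\dot w_1\big)\underbrace{(\dot w_1^{-1}t^{-1}\dot w_1)}_{=:r\in T}\underbrace{(\dot w_1^{-1}vtu\dot w_2)}_{=\mathfrak u}\dot w_2^{-1}\dot w_2'.
\]
Wait — more cleanly: from \eqref{equahhhfhs1}, $\mathfrak u=\dot w_1^{-1}vtu\,\dot w_2$, so $vtu=\dot w_1\mathfrak u\dot w_2^{-1}$ is not quite what I want; instead I substitute directly, getting $\dot w_1'\mathfrak u'\dot w_2'=\dot w_1'\,t^{-1}\,(\dot w_1\mathfrak u\dot w_2)\,t u t^{-1}\cdots$ — this is the routine part I will carry out carefully, the upshot being $\dot w_1'\mathfrak u'\dot w_2'=r\cdot s_1\,\dot\omega_1\mathfrak u\dot\omega_2\,s_2\cdot(\text{torus conjugate of }u)$ for suitable torus elements, so that by \eqref{equajhjhhhhjfs1} its $\delta^*$-value equals $r s_1\,\delta^*(\dot\omega_1\mathfrak u\dot\omega_2)\,s_2$, and then $\mathbf{w}_{\dot w_1',\dot w_2'}(\mathfrak u')=\big(\delta^*(\dot w_1'\mathfrak u'\dot w_2')\big)^{-1}\dot w_1'\mathfrak u'\dot w_2'$ collapses, after cancellation, to a \emph{torus conjugate} of $\mathbf{w}_{\dot\omega_1,\dot\omega_2}(\mathfrak u)=\big(\delta^*(\dot\omega_1\mathfrak u\dot\omega_2)\big)^{-1}\dot\omega_1\mathfrak u\dot\omega_2$. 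The conjugating element $s\in T$ will be read off as a word in $s_1,s_2,r$ (hence $s=s_1\cdot w_1^{-1}(\text{something})\cdot s_2$, independent of $\mathfrak u$), and conjugation by $s$ on $\Ue$ is exactly $\mathfrak t_s\in\mathcal T$. Since two rational maps on the irreducible variety $\Ue$ agreeing on a nonempty open set are equal, this gives $\mathbf{w}_{\dot w_1',\dot w_2'}\mathbf{w}_{\dot w_1,\dot w_2}=\mathfrak t_s\,\mathbf{w}_{\dot\omega_1,\dot\omega_2}$ as elements of $\Cr_m(K)$.

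The main obstacle I anticipate is bookkeeping the torus factors so that the left multiplier comes out as a \emph{single} $\mathfrak t_s$ (a conjugation $\mathfrak u\mapsto s\mathfrak u s^{-1}$) rather than an asymmetric left/right torus multiplication: one must verify that the $T$-factor produced on the left of $\dot\omega_1\mathfrak u\dot\omega_2$ and the one produced on the right are genuinely inverse to each other modulo the extra $\delta^*$-cancellations — which is forced because both $\mathbf{w}_{\dot w_1',\dot w_2'}\mathbf{w}_{\dot w_1,\dot w_2}(\mathfrak u)$ and $\mathbf{w}_{\dot\omega_1,\dot\omega_2}(\mathfrak u)$ land in $\Ue$, and by Lemma \ref{lemffffdfffs1} an element of $T\cdot\Ue\cdot T$ lies in $\Ue$ only if the two torus factors are inverse. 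So Lemma \ref{lemffffdfffs1} is precisely the tool that pins down $s$ and guarantees the claimed form; the rest is the commutation identities $\dot w\,T\,\dot w^{-1}=T$ and the intertwining property \eqref{equajhjhhhhjfs1} of $\delta^*$.
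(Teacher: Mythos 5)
Your plan is the same as the paper's: evaluate the composite at a point of the common regularity locus, write $\dot w_1^\prime\dot w_1$ and $\dot w_2\dot w_2^\prime$ as torus multiples of $\dot\omega_1,\dot\omega_2$, and use the equivariance \ref{equajhjhhhhjfs1} of $\delta^*$. But the sketch has a genuine soft spot exactly where the lemma has content: one must produce a \emph{single} $s\in T$, independent of $\mathfrak u$. Your bookkeeping goes astray (from \ref{equahhhfhs1} one has $vtu=\dot w_1\mathfrak u\dot w_2$, hence $\mathfrak u^\prime=t^{-1}\dot w_1\mathfrak u\dot w_2$; the identity $\mathfrak u=\dot w_1^{-1}vtu\dot w_2$ is false, and there is no leftover ``torus conjugate of $u$'' factor), and, more seriously, you propose that the conjugator $s$ is a word in $s_1,s_2$ and $r=\dot w_1^{-1}t^{-1}\dot w_1$. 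Since $t=\delta^*(\dot w_1\mathfrak u\dot w_2)$ depends on $\mathfrak u$, such an $s$ would depend on $\mathfrak u$, and the assertion of the lemma (one fixed $\mathfrak t_s$) would not follow. Lemma \ref{lemffffdfffs1} does not repair this: it only shows that, pointwise, the right torus factor is inverse to the left one, i.e. that each value is \emph{some} conjugate; it says nothing about the conjugating element being constant along $\Ue$.

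The missing observation (which is the heart of the paper's computation) is that the entire left torus factor is killed by the shape of the map. Carrying out your substitution correctly gives $\dot w_1^\prime\mathfrak u^\prime\dot w_2^\prime=(\dot w_1^\prime t^{-1}\dot w_1^{\prime -1})\,s_1\,\dot\omega_1\mathfrak u\dot\omega_2\,s_2$, and since $\delta^*(axb)=a\,\delta^*(x)\,b$ for $a,b\in T$ by \ref{equajhjhhhhjfs1}, the expression $\big(\delta^*(x)\big)^{-1}x$ is unchanged by left $T$-multiplication of $x$; so the $\mathfrak u$-dependent factor $(\dot w_1^\prime t^{-1}\dot w_1^{\prime -1})s_1$ disappears and only the constant right factor survives, yielding $s_2^{-1}\,{\bf w}_{\dot\omega_1,\dot\omega_2}(\mathfrak u)\,s_2=\mathfrak t_{s_2^{-1}}{\bf w}_{\dot\omega_1,\dot\omega_2}(\mathfrak u)$, i.e. $s=s_2^{-1}$ independent of $\mathfrak u$. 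With that cancellation made explicit your argument closes; your final step (two elements of $\Cr_m(K)$ agreeing on a nonempty open subset of the irreducible variety $\Ue$ coincide) is correct and is implicit in the paper as well.
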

\begin{proof}
We have
\begin{equation}
\label{equajhjhhfhjhhfs1}
t_1 \dot \omega_1 =  \dot w_1^\prime \dot w_1, \,\,\, \dot \omega_2t_2 = \dot w_2 \dot w_2^\prime \,\,\,\text{for some}\,\,\, t_1, t_2 \in T.
\end{equation}
Let $$\mathfrak u \in \Ue_{w_1, w_2}\cap {\bf w}^{-1}_{\dot w_1, \dot w_2}(\Ue_{w^\prime_1, w^\prime_2}\cap \Ue_{w_1^{-1}, w_2^{-1}})$$ (note, that the intresections  of non-empty open subsets of $\Ue$ are non-empty open subsets). Hence the map ${\bf w}_{\dot w_1^\prime, \, \dot w_2^\prime }$ is regular at the point ${\bf w}_{\dot w_1,\,  \dot w_2}(\mathfrak u) $ and we have
$${\bf w}_{\dot w_1^\prime, \,\dot w_2^\prime }{\bf w}_{\dot w_1,\, \dot w_2}(\mathfrak u) = {\bf w}_{\dot w_1^\prime, \, \dot w_2^\prime} \Big(\underbrace{\big (\delta^* (\dot w_1 \mathfrak u \dot w_2)\big )^{-1} }_{t \in T} \dot w_1 \mathfrak u \dot w_2\Big) =$$
$$ =\big (\delta^*((\dot w_1^\prime t \dot w_1^{\prime -1})\dot w_1^\prime (\dot w_1 \mathfrak u \dot w_2) \dot w_2^\prime)\big )^{-1}  (\dot w_1^\prime t \dot w_1^{\prime -1})\dot w_1^\prime (\dot w_1 \mathfrak u \dot w_2) \dot w_2^\prime  \stackrel{\ref{equajhjhhhhjfs1}}{= }$$
$$= \big (\delta^* (\dot w_1^\prime (\dot w_1 \mathfrak u \dot w_2) \dot w_2^\prime)\big )^{-1}  \dot w_1^\prime (\dot w_1 \mathfrak u \dot w_2) \dot w_2^\prime \stackrel{\ref{equajhjhhfhjhhfs1}}{=} \big (\delta^* (t_1 \dot \omega_1 \mathfrak u \dot \omega_2 t_2)\big )^{-1}  t_1 \dot \omega_1 \mathfrak u \dot \omega_2 t_2 = $$$$ \stackrel{\ref{equajhjhhhhjfs1}}{= }t_2^{-1}\big (\delta^* (\dot \omega_1 \mathfrak u \dot \omega_2 )\big )^{-1}   \dot \omega_1 \mathfrak u \dot \omega_2 t_2 = \mathfrak t_s {\bf w}_{\dot \omega_1, \dot \omega_2}\,\,\, \text{where}\,\,\,s  = t_2^{-1}.$$

\end{proof}
\begin{lemma}
\label{lemjjhjhhhfjjjfs1}
The subgroup $\mathcal T$ is normal in $\Ne$.
\end{lemma}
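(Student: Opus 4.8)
The plan is to verify that $\mathcal T$ is stable under conjugation by each generator of $\Ne$; since the normalizer $N_{\Ne}(\mathcal T)=\{g\in\Ne\mid g\mathcal T g^{-1}=\mathcal T\}$ is a subgroup of $\Ne$, this suffices. By definition $\Ne=\langle\,\mathcal T,\ {\bf w}_{\dot w_1,\dot w_2}\mid(w_1,w_2)\in W\times W\,\rangle$. For a generator $\mathfrak t_{s'}\in\mathcal T$ there is nothing to do: the assignment $s\mapsto\mathfrak t_s$ is a homomorphism from $T$ into $\Cr_m(K)$ (because $\mathfrak t_s\mathfrak t_{s'}=\mathfrak t_{ss'}$), so $\mathcal T$ is abelian and $\mathfrak t_{s'}$ centralizes it. Hence the whole point is to control conjugation of $\mathfrak t_s$ by the maps ${\bf w}_{\dot w_1,\dot w_2}$.

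The key claim I would establish is the identity of rational maps
\[
{\bf w}_{\dot w_1,\dot w_2}\circ\mathfrak t_s\circ{\bf w}_{\dot w_1,\dot w_2}^{-1}=\mathfrak t_{\,\dot w_2^{-1}s\dot w_2}\qquad(s\in T,\ (w_1,w_2)\in W\times W),
\]
where $\dot w_2^{-1}s\dot w_2\in T$ depends only on $w_2$. Both sides are rational self-maps of the irreducible variety $\Ue\approx A_K^m$, so it is enough to check the equality on the common domain of regularity, a non-empty (hence dense) open subset of $\Ue$ by Lemma~\ref{lemff21ffdfffs1} together with the fact that a finite intersection of non-empty open subsets of $\Ue$ is non-empty. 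For such a point $\mathfrak u$ I would use ${\bf w}_{\dot w_1,\dot w_2}^{-1}={\bf w}_{\dot w_1^{-1},\dot w_2^{-1}}$ (Corollary~\ref{corhhjhjjhs1}) and unwind the three maps: writing the Gauss decomposition $\dot w_1^{-1}\mathfrak u\dot w_2^{-1}=vtu$ gives ${\bf w}_{\dot w_1^{-1},\dot w_2^{-1}}(\mathfrak u)=t^{-1}vtu$ and $\mathfrak u=\dot w_1(vtu)\dot w_2$; applying $\mathfrak t_s$ and then ${\bf w}_{\dot w_1,\dot w_2}$, and substituting the latter expression for $\mathfrak u$, the relevant group element becomes $a\,\mathfrak u\,b$ with $a=\dot w_1 st^{-1}\dot w_1^{-1}\in T$ and $b=\dot w_2^{-1}s^{-1}\dot w_2\in T$; then $\delta^*(a\mathfrak u b)=ab$ by \ref{equajhjhhhhjfs1}, and a one-line cancellation (using $(ab)^{-1}a=b^{-1}$ and uniqueness of the Gauss decomposition, as in Lemma~\ref{lemffffdfffs1}) gives the value $b^{-1}\mathfrak u b=\mathfrak t_{b^{-1}}(\mathfrak u)$ with $b^{-1}=\dot w_2^{-1}s\dot w_2$. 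This proves the claim. Since $s\mapsto\dot w_2^{-1}s\dot w_2$ is a bijection of $T$, conjugation by ${\bf w}_{\dot w_1,\dot w_2}$ carries the generating family $\{\mathfrak t_s\mid s\in T\}$ of $\mathcal T$ bijectively onto itself, so ${\bf w}_{\dot w_1,\dot w_2}\,\mathcal T\,{\bf w}_{\dot w_1,\dot w_2}^{-1}=\mathcal T$; as $(w_1,w_2)$ was arbitrary the inverse generators are covered too, and therefore $\mathcal T\trianglelefteq\Ne$.

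I would also point out a shortcut that bypasses the hands-on triangular bookkeeping. One first checks $\mathfrak t_s={\bf w}_{\,s,\,s^{-1}}$, i.e. that $\mathfrak t_s$ coincides with the map ${\bf w}_\omega$ attached to $\omega(g)=sgs^{-1}$ with $s,s^{-1}\in T\leq N$ taken as (non-standard) preimages of $e\in W$: for $\mathfrak u=vu\in\Ue$ one has $s\mathfrak u s^{-1}=(svs^{-1})(sus^{-1})\in U^-U$, so $\delta^*(s\mathfrak u s^{-1})=1$ and ${\bf w}_{s,s^{-1}}(\mathfrak u)=s\mathfrak u s^{-1}=\mathfrak t_s(\mathfrak u)$. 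Then ${\bf w}_{\dot w_1,\dot w_2}\circ\mathfrak t_s={\bf w}_{\dot w_1,\dot w_2}\circ{\bf w}_{s,s^{-1}}$, and Lemma~\ref{lemhhjfjjhfjhjjhfs1} — whose proof uses only that a product of preimages is a preimage of the product up to a factor from $T$, hence applies to arbitrary preimages, in particular to the degenerate choice $w_1''=w_2''=e$ with preimages $s,s^{-1}$ — yields ${\bf w}_{\dot w_1,\dot w_2}\circ{\bf w}_{s,s^{-1}}=\mathfrak t_{s'}\,{\bf w}_{\dot\omega_1,\dot\omega_2}$ with $\omega_1=w_1$, $\omega_2=w_2$; taking $\dot\omega_i=\dot w_i$ this reads ${\bf w}_{\dot w_1,\dot w_2}\,\mathfrak t_s\,{\bf w}_{\dot w_1,\dot w_2}^{-1}=\mathfrak t_{s'}\in\mathcal T$, and one concludes as before. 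On either route the only delicate point is the triangular cancellation (equivalently, running the proof of Lemma~\ref{lemhhjfjjhfjhjjhfs1} in that degenerate case); everything else is formal, resting on uniqueness of the Gauss decomposition on $\Ue_G$, the formula \ref{equajhjhhhhjfs1}, and Corollary~\ref{corhhjhjjhs1}.
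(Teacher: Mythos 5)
Your proof is correct and follows essentially the same route as the paper: you conjugate a generator $\mathfrak t_s$ by a generator ${\bf w}_{\dot w_1,\dot w_2}$ and evaluate pointwise on a dense open subset of $\Ue$, using the Gauss decomposition, formula \ref{equajhjhhhhjfs1} for $\delta^*$ and Corollary \ref{corhhjhjjhs1}, to land back in $\mathcal T$ (the paper computes ${\bf w}^{-1}\mathfrak t_s{\bf w}$ rather than ${\bf w}\mathfrak t_s{\bf w}^{-1}$, which is immaterial). Your explicit identification $s'=\dot w_2^{-1}s\dot w_2$ and the alternative via Lemma \ref{lemhhjfjjhfjhjjhfs1} with the degenerate preimages $s,s^{-1}$ of $e$ are fine refinements of the same argument.
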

\begin{proof}
Let $\mathfrak t_s\in \mathcal T,\,\,{\bf w}_{\dot w_1,\, \dot w_2}\in \Ne$ and let $\mathfrak u \in \Ue_{w_1, w_2}$. Then
$$\mathfrak t_s{\bf w}_{\dot w_1,\, \dot w_2}  (\mathfrak u) = \underbrace{s\Big(\big(\delta^*(\dot w_1 \mathfrak u \dot w_2)\big)^{-1}}_{ := t \in T}(\dot w_1 \mathfrak u  \dot w_2)\Big) s^{-1}  = t (\dot w_1 \mathfrak u  \dot w_2) s^{-1}.$$
Then
$${\bf w}^{-1}_{\dot w_1,\, \dot w_2}\mathfrak t_s{\bf w}_{\dot w_1,\, \dot w_2}  (\mathfrak u) \stackrel{\text{Cor.}\, \ref{corhhjhjjhs1}}{=}
{\bf w}_{\dot w_1^{-1},\, \dot w_2^{-1}} \mathfrak t_s{\bf w}_{\dot w_1,\, \dot w_2}(\mathfrak u) =  $$$$=\underbrace{\Big (\delta^*(\dot w_1^{-1} t (\dot w_1
\mathfrak u  \dot w_2) s^{-1} \dot w_2^{-1})\Big)^{-1} (\dot w_1^{-1} t (\dot w_1}_{:=t_1 \in T} \mathfrak u  \underbrace{\dot w_2) s^{-1}\dot w_2^{-1})}_{:= t_2 \in T} = t_1 \mathfrak u t_2\stackrel{\text{Lem.}\,\ref{lemffffdfffs1}}{ =} $$$$ = t_2^{-1}\mathfrak u t_2 =
\mathfrak t_{s^\prime}(\mathfrak u)\,\,\text{where}\,\,s^\prime = t_2^{-1}. $$
\end{proof}

Now we show that the group $\Ne$ acts partially on the affine space $\Ue$. For every $\mathfrak u$ we define $\Ne(\mathfrak u)$ as the set of elements of $\Ne$ which are regular at the point $\mathfrak u$. Then conditions i. - iii. of the Definition \ref{defhhhfkkhgfs1} obviously hold.
 %Let $\mathfrak n \in \Ne(\mathfrak u)$ and let $\mathfrak u^\prime = \mathfrak n(\mathfrak u)$.
%Supoose $\mathfrak n^\prime \in \Ne$ is regular at the point $\mathfrak u^\prime$ and therefore $\mathfrak n^\prime \in \Ne(\mathfrak u^\prime )$. Lemmas \ref{lemhhjfjjhfjhjjhfs1},  \ref{lemjjhjhhhfjjjfs1} imply
%$$ \mathfrak n  = \mathfrak t_s {\bf w}_{\dot w_1,\, \dot w_2}, \,\,\,\mathfrak n^\prime  = \mathfrak t_{s^\prime} {\bf w}_{\dot w^\prime_1, \dot w_2^\prime}$$
%for some $\dot w_1, \dot w_2, \dot w^\prime_1, \dot w_2^\prime\in N,\, \,s, s^\prime \in T$. Hence
%$$\mathfrak n^\prime (\mathfrak u^\prime ) = \mathfrak n^\prime( \mathfrak n(\mathfrak u)) =
%\underbrace{(\mathfrak t_s {\bf w}_{\dot w_1, \,\dot w_2}\mathfrak t^\prime_{s^\prime} {\bf w}_{\dot w^\prime_1, \, \dot w_2^\prime}}_{\in \Ne})(\mathfrak u).$$
%Thus we have the condition iii. Also,
 From Proposition \ref{lemhhjhhjfs1} and  Corollary \ref{corhhjhjjhs1} we have condition iv:
$$\mathfrak n^{-1}  \in \Ne(\mathfrak n(\mathfrak u))\,\,\,\text{for every}\,\,\,\mathfrak n \in \Ne(\mathfrak \mathfrak u).$$

\bigskip

Now we may prove Theorem 1.

%\begin{theorem}
%\label{thggfggfhhs1}
%Elements $\mathfrak u_1, \mathfrak u_2 \in \Ue$ belong to the same double coset $N \mathfrak u  N$ if and only if they are in the same $\Ne$-orbit.
%\end{theorem}
\begin{proof}

The implication $\Leftarrow$ follows directly from the definition of double cosets.

Let $\mathfrak u_1, \mathfrak u_2 \in \Ue$. Then
$$\mathfrak u_1, \mathfrak u_2 \in N \mathfrak u N \stackrel{\ref{equaffffs1}}{\Leftrightarrow} n_1 \mathfrak u_1n_2 = \mathfrak u_2\,\,\, \text{for some }\,\,\, n_1, n_2 \in N.$$
Let $n_1 \mathfrak u_1n_2 = \mathfrak u_2$. Then $n_1 = t_1 \dot w_1, n_2 = \dot w_2 t_2$ for some $t_1, t_2 \in T$. Since $\mathfrak u_1, \mathfrak u_2 \in \Ue$ then $\mathfrak u_1 \in \Ue_{w_1, w_2}$.
 We have
$$n_1 \mathfrak u_1n_2 = t_1 \dot w_1 \mathfrak u_1 \dot w_2 t_2 =   \underbrace{t_1  (\delta^* (\dot w_1\mathfrak u\dot w_2))}_{:= t_1^\prime\in T}\underbrace{{\bf w}_{\dot w_1, \, \dot w_2}(\mathfrak u)}_{\in \Ue} t_2 = \mathfrak u_2 \in \Ue\stackrel{\text{Lemma} \,\ref{lemffffdfffs1}}{\Rightarrow} t^\prime_1t_2 = 1 \Rightarrow $$
$$\Rightarrow u_2 = \mathfrak t_s {\bf w}_{w_1, w_2}(\mathfrak u_1)\,\,\,\text{where}\,\,\,s = t_2^{-1}.$$
\end{proof}

\section{Example. Case $G = \SL_2(\C)$}

\subsection{Group $\Ne$}

$\,$

Let $G = \SL_2(K)$. Then
$$\Ue = \{\begin{pmatrix} 1&\alpha\cr
\beta & 1+\alpha \beta \cr \end{pmatrix}\,\,\,\mid\,\,\a, \b \in K\}.$$
Here $W = \{e, w\}$ is the group consisting of two elements --  the identity $e$ and the involution $w$. We take
 $$\dot e  = \begin{pmatrix} 1&0\cr 0&1\cr \end{pmatrix}, \,\,\dot w = \begin{pmatrix} 0&1\cr -1&0\cr \end{pmatrix}.$$
 Hence
$${\bf w}_{\dot w,\, \dot e}\Big(\begin{pmatrix} 1&\alpha\cr
\beta & 1+\alpha \beta \cr \end{pmatrix}\Big) = \begin{pmatrix} \beta^{-1}&0\cr 0&\beta\cr \end{pmatrix}\begin{pmatrix} \beta &(1+\a\b)\cr
- 1 & -\a \cr \end{pmatrix} = \begin{pmatrix} 1 &\b^{-1}(1+\a\b)\cr
- \beta & -\a\b \cr \end{pmatrix},  $$
$${\bf w}_{\dot e,\, \dot w}\Big(\begin{pmatrix} 1&\alpha\cr
\beta & 1+\alpha \beta \cr \end{pmatrix}\Big) =\begin{pmatrix}- \a^{-1}&0\cr 0&-\a\cr \end{pmatrix} \begin{pmatrix} -\a&1\cr
-(1+\a\b) & \beta \cr \end{pmatrix} =  \begin{pmatrix} 1&-\a^{-1}\cr
\a(1+\a\b) & -\a\beta \cr \end{pmatrix},  $$
$${\bf w}_{\dot w,\, \dot w}\Big(\begin{pmatrix} 1&\alpha\cr
\beta & 1+\alpha \beta \cr \end{pmatrix}\Big) =\begin{pmatrix} - (1+\a\b)^{-1}&0\cr 0&-(1+\a\b)\cr \end{pmatrix} \begin{pmatrix}
-(1+\a\b) & \beta \cr
 \a&-1\cr
 \end{pmatrix} = $$
 $$ = \begin{pmatrix}
 1& -\beta (1 +\a\b)^{-1}\cr
 -\a(1+\a\b)&(1+\a\b)\cr
 \end{pmatrix}.$$
 It is easy to check
 \begin{equation}
 \label{equajjhjhhffhjs1}
 {\bf w}_{\dot e,\, \dot w}{\bf w}_{\dot w,\, \dot e} ={\bf w}_{\dot w,\, \dot e} {\bf w}_{\dot e,\, \dot w} = {\bf w}_{\dot w,\, \dot w}\,\,\,\text{and}\,\,\,{\bf w}_{\dot e,\, \dot w}^2 = {\bf w}_{\dot w,\, \dot e}^2 = {\bf w}_{\dot e,\, \dot e} = {\bf e}
 \end{equation}
(here ${\bf e}$ is the identity of $\Cr_2(K)$). Put $${\bf w}_l:= {\bf w}_{\dot w,\, \dot e},\,\,\,{\bf w}_r:= {\bf w}_{\dot e,\, \dot w},\,\,\,{\bf w}_{d}:= {\bf w}_{\dot w,\, \dot w}.$$
Thus, ${\bf w}_l,{\bf w}_r:= {\bf w}_{\dot e,\, \dot w},{\bf w}_{d} \in \Cr_2(K)$ are birational transformations of the affine plane $$A_K^2 = \{(\a, \b)\,\,\,\mid\,\,\a, \b\in K\}.$$
Namely,
$$
{\bf w}_l((\a, \b)) = (\b^{-1}(1+\a\b), - \beta),\,\,\, {\bf w}_r((\a, \b)) = (-\a^{-1}, \a(1+\a\b)),$$
$$
{\bf w}_{d}((\a, \b)) = ( -\beta (1 +\a\b)^{-1}, -\a(1+\a\b)).$$
The element $\mathfrak t_s \in \mathcal T\leq \Cr_2(K)$ acts on $A_K^2$ according to the following formula
$$\mathfrak t_s((\a, \b)) = (s^2 \a, s^{-2}\b).$$
Also
$${\bf w}_l((\a, \b))\mathfrak t_s {\bf w}_l ((\a, \b)) = {\bf w}_l(s^2\b^{-1}(1+\a\b), - s^{-2}\beta) = (s^2 \a, s^{-2}\b) =\mathfrak t_s((\a, \b)), $$
$${\bf w}_r((\a, \b))\mathfrak t_s {\bf w}_r ((\a, \b)) = {\bf w}_r( (-s^2\a^{-1}, s^{-2}\a(1+\a\b)) = (s^{-2} \a, s^2\b) = \mathfrak t^{-1}_s ((\a, \b)).$$
Hence
$$\Ne = \underbrace{\langle \mathcal T, {\bf w}_r\rangle}_{\approx D_\infty} \times \underbrace{\langle{\bf w}_l\rangle }_{\approx \Z_2}$$
(here $D_\infty$ is the infinite dihedral group and $\Z_2 = \Z/2\Z$ is the group of order $2$).
\subsection{The decomposition of $A_K^2$ }
$\,$

Let
 $$\mathcal M = \{(\alpha, \beta) \,\,\,\mid\,\,\,\a\ne 0, \b \ne 0, \a\b\ne -1\}$$
 \begin{lemma}
 \label{lemhjhhfhhhjs1}
  Every element  $g \in \Ne$ stabilizes the open set $\mathcal M$.
 \end{lemma}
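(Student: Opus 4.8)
The plan is to show directly that each generator of $\Ne$ maps $\mathcal M$ bijectively onto $\mathcal M$, and then conclude for arbitrary $g \in \Ne$ by composing. Since $\Ne$ is generated by $\mathcal T$ together with ${\bf w}_l$ and ${\bf w}_r$ (and ${\bf w}_d = {\bf w}_l {\bf w}_r$, so it suffices to handle $\mathcal T$, ${\bf w}_l$, ${\bf w}_r$), it is enough to check the three cases; the bijectivity on $\mathcal M$ then follows because each generator has an inverse that is again one of these generators (or a product of them), and we will have checked that that inverse also preserves $\mathcal M$.

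First I would dispose of the torus part. For $\mathfrak t_s \in \mathcal T$ we have $\mathfrak t_s((\a,\b)) = (s^2\a, s^{-2}\b)$ with $s \ne 0$, so $s^2\a \ne 0 \Leftrightarrow \a \ne 0$, similarly for $\b$, and $(s^2\a)(s^{-2}\b) = \a\b$, so the condition $\a\b \ne -1$ is preserved verbatim. Thus $\mathfrak t_s$ maps $\mathcal M$ into $\mathcal M$, and since $\mathfrak t_s^{-1} = \mathfrak t_{s^{-1}} \in \mathcal T$ does the same, $\mathfrak t_s(\mathcal M) = \mathcal M$.

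Next I would check ${\bf w}_l$ and ${\bf w}_r$ using the explicit formulas
$${\bf w}_l((\a,\b)) = (\b^{-1}(1+\a\b), -\b), \qquad {\bf w}_r((\a,\b)) = (-\a^{-1}, \a(1+\a\b)).$$
Suppose $(\a,\b) \in \mathcal M$. For ${\bf w}_l$: the expression is defined since $\b \ne 0$; the new first coordinate $\b^{-1}(1+\a\b)$ is nonzero precisely because $1 + \a\b \ne 0$; the new second coordinate $-\b$ is nonzero; and the new product is $\b^{-1}(1+\a\b)\cdot(-\b) = -(1+\a\b)$, which equals $-1$ iff $\a\b = 0$, impossible since $\a,\b \ne 0$. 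Hence ${\bf w}_l(\mathcal M) \subseteq \mathcal M$. The same style of check works for ${\bf w}_r$: it is defined since $\a \ne 0$; $-\a^{-1} \ne 0$; $\a(1+\a\b) \ne 0$ since $\a \ne 0$ and $1+\a\b \ne 0$; and the product $-\a^{-1}\cdot\a(1+\a\b) = -(1+\a\b) \ne -1$. Since ${\bf w}_l^2 = {\bf w}_r^2 = {\bf e}$ by \ref{equajjhjhhffhjs1}, each of ${\bf w}_l, {\bf w}_r$ is its own inverse, so these inclusions are in fact equalities $\mathcal M = {\bf w}_l(\mathcal M) = {\bf w}_r(\mathcal M)$.

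Finally, an arbitrary $g \in \Ne$ is a finite product of the elements $\mathfrak t_s$, ${\bf w}_l$, ${\bf w}_r$; each factor restricts to a bijection of $\mathcal M$ onto itself and, in particular, is regular at every point of $\mathcal M$, so the composite $g$ is regular on $\mathcal M$ and $g(\mathcal M) = \mathcal M$. I do not expect a genuine obstacle here; the only mild subtlety is bookkeeping — one must note that $\mathcal M$ is contained in the domain of regularity of every generator (which is exactly why $\mathcal M$ was defined by excising the three "bad" loci $\a = 0$, $\b = 0$, $\a\b = -1$), so that the partial action of $\Ne$ becomes an honest action when restricted to $\mathcal M$. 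This observation is the point of the lemma and will be used in the subsequent orbit computation.
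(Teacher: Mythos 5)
Your proof is correct and follows essentially the same route as the paper: check the generators $\mathfrak t_s$, ${\bf w}_l$, ${\bf w}_r$ on $\mathcal M$, using that in both nontrivial cases the new product of coordinates is $-(1+\a\b)$, hence $1+\a'\b' = -\a\b \ne 0$, and then conclude for arbitrary elements of $\Ne$ by composition. Your extra remarks on bijectivity and regularity of composites on $\mathcal M$ are a harmless (and slightly more explicit) elaboration of what the paper leaves implicit.
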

 \begin{proof}
 We have to check:  if $(\alpha, \beta) \in \mathcal M$ then $g((\alpha, \beta)) \in \mathcal M$ if $g = \t_s, \w_l, \w_r$. It is obvious for $\t_s$.
 Further,
 $$\w_l((\alpha, \beta))  = ( \underbrace{\b^{-1}(1+\a\b)}_{ = \a^\prime \ne 0 } ,  \underbrace{-\b}_{ = \b^\prime \ne 0}), {\bf w}_r((\a, \b)) = (\underbrace{-\a^{-1}}_{=\a^\prime \ne 0 }, \underbrace{\a(1+\a\b)}_{=\b^\prime \ne 0}).$$
 In both cases $\a^\prime \b^\prime = -(1+\a\b)$.
 Thus, $(1 +\a^\prime\b^\prime) = -\a\b \ne 0$
 and therefore $$\w_l((\alpha, \beta)), {\bf w}_r((\a, \b))  \in\mathcal M.$$
 \end{proof}
 Let
 $$\mathcal M_{0,1} = \{(0, \beta) \,\,\,\mid\,\,\, \b \ne 0\}, \mathcal M_{1,0} = \{(\a, 0) \,\,\,\mid\,\,\, \a \ne 0\},\,\,\,\mathcal M_{-1} = \{(\alpha, \beta) \,\,\,\mid\,\,\,\a \b = -1\}.$$

 From the definition of $\w_l, \w_r, \w_d$
 we have the following formulas
\begin{equation}
\label{equahhhhhhhhjjjjjjjhhhhhs1}
 \w_l(\mathcal M_{0,1}) = \{(\b^{-1}, -\beta) \,\,\,\mid\,\,\, \b \ne 0\} = \mathcal M_{-1},
 \end{equation}
 $$\w_l(\mathcal M_{-1}) = \{(\b^{-1}(1 +\a\b), -\beta) = (0, -\beta)\,\,\,\mid\,\,\, \b \ne 0\} = \mathcal M_{0,1},
 $$
  $$\w_r(\mathcal M_{1,0}) = \{(-\a^{-1}, \a) \,\,\,\mid\,\,\, \a \ne 0\} = \mathcal M_{-1},\,\,\w_r(\mathcal M_{-1}) = \{(-\a^{-1}, 0)\,\,\,\mid\,\,\, \a \ne 0\} = \mathcal M_{1,0},$$
  $$\w_d(\mathcal M_{1,0})  = \mathcal M_{0,1},\,\,\w_d(\mathcal M_{0,1}) = \mathcal M_{1,0}.$$

 \begin{lemma}
 \label{lemhhjhhjhhjfs1}
 The set $$\tilde{\mathcal M} = \mathcal M_{0,1}  \cup \mathcal M_{0,1}\cup\mathcal M_{-1}$$ is just one $\Ne$-orbit. The point $(0, 1)$ can be taken as a representative of this orbit.
 \end{lemma}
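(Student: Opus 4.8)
\emph{Proof plan.} Read the stated set as $\tilde{\mathcal M}=\mathcal M_{0,1}\cup\mathcal M_{1,0}\cup\mathcal M_{-1}$. The plan is to prove the two inclusions $\tilde{\mathcal M}\subseteq\mathcal O$ and $\mathcal O\subseteq\tilde{\mathcal M}$, where $\mathcal O$ denotes the $\Ne$-orbit of $(0,1)$. Since $\Ne$ acts partially on $\Ue\approx A_K^2$ and conditions i.--iv. of Definition \ref{defhhhfkkhgfs1} hold for this action, the $\Ne$-orbits form a partition of $A_K^2$ into equivalence classes; hence the two inclusions together say precisely that $\tilde{\mathcal M}=\mathcal O$, with $(0,1)$ as a representative.

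For $\tilde{\mathcal M}\subseteq\mathcal O$ I would move $(0,1)$ to every point of $\tilde{\mathcal M}$ by exhibiting elements of $\Ne$. First, $\mathfrak t_s$ is regular on all of $A_K^2$ and $\mathfrak t_s((0,\beta))=(0,s^{-2}\beta)$; as $K$ is algebraically closed, $s\mapsto s^{-2}$ is surjective onto $K^{\ast}$, so every $(0,\beta)$ with $\beta\ne0$ is $\Ne$-equivalent to $(0,1)$, i.e. $\mathcal M_{0,1}\subseteq\mathcal O$. Next, $\w_l$ is regular at every point of $\mathcal M_{0,1}$ (where $\beta\ne0$) and $\w_l(\mathcal M_{0,1})=\mathcal M_{-1}$ by \ref{equahhhhhhhhjjjjjjjhhhhhs1}, so $\mathcal M_{-1}\subseteq\mathcal O$. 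Finally $\w_d=\w_{\dot w,\dot w}\in\Ne$ is regular at every point of $\mathcal M_{0,1}$ (where $1+\alpha\beta=1\ne0$) and $\w_d(\mathcal M_{0,1})=\mathcal M_{1,0}$, so $\mathcal M_{1,0}\subseteq\mathcal O$. By transitivity of the orbit equivalence this yields $\tilde{\mathcal M}\subseteq\mathcal O$.

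For $\mathcal O\subseteq\tilde{\mathcal M}$ I would use the disjoint decomposition $A_K^2=\mathcal M\sqcup\tilde{\mathcal M}\sqcup\{(0,0)\}$: a point with $\alpha\ne0$, $\beta\ne0$ and $\alpha\beta\ne-1$ lies in $\mathcal M$, and otherwise either $\alpha\beta=-1$ (then $\alpha,\beta\ne0$, so the point lies in $\mathcal M_{-1}$), or $\alpha=0\ne\beta$ (in $\mathcal M_{0,1}$), or $\beta=0\ne\alpha$ (in $\mathcal M_{1,0}$), or $(\alpha,\beta)=(0,0)$. By Lemma \ref{lemhjhhfhhhjs1} the set $\mathcal M$ is $\Ne$-stable, hence a union of $\Ne$-orbits; and $(0,0)$ is fixed by every element of $\Ne$ that is regular at it ($\mathfrak t_s$ and $\w_d$ fix it, whereas $\w_l$ and $\w_r$ are not regular there), so $\{(0,0)\}$ is an $\Ne$-orbit. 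Because the $\Ne$-orbits partition $A_K^2$, the complement $\tilde{\mathcal M}$ of these two unions of orbits is itself a union of $\Ne$-orbits; as it contains $(0,1)$ it contains $\mathcal O$. Together with the first inclusion, $\tilde{\mathcal M}=\mathcal O$.

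The delicate point is the inclusion $\mathcal O\subseteq\tilde{\mathcal M}$: one must argue with genuine elements of $\Ne$, not with ``step-by-step'' words in the generators, since a composed Cremona map can be regular at a point where an intermediate map is not (e.g. $\w_d=\w_r\w_l$ is regular at $(1,0)$ while $\w_l$ is not). This is why I route the argument through Lemma \ref{lemhjhhfhhhjs1} and the partition of $A_K^2$ into orbit-unions rather than trying to check directly that the generators preserve $\tilde{\mathcal M}$. Alternatively one could use the structure $\Ne=\langle\mathcal T,\w_r\rangle\times\langle\w_l\rangle$, note that every element of $\Ne$ has the form $\mathfrak t_s$, $\mathfrak t_s\w_l$, $\mathfrak t_s\w_r$ or $\mathfrak t_s\w_d$, and verify the $\Ne$-stability of $\tilde{\mathcal M}$ directly for these four families, but the decomposition argument is shorter.
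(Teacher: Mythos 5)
Your proof is correct, and its core coincides with the paper's argument: you show all of $\tilde{\mathcal M}$ lies in the orbit of $(0,1)$ by sweeping out $\mathcal M_{0,1}$ with $\mathfrak t_s$ and then passing to $\mathcal M_{-1}$ and $\mathcal M_{1,0}$ via $\w_l$ and $\w_d$ (formulas \ref{equahhhhhhhhjjjjjjjhhhhhs1}), which is exactly the paper's observation that the three pieces are the $\mathcal T$-orbits of $(0,1)$, $(1,0)$, $(1,-1)$ and that these three points are $\Ne$-equivalent. The only real difference is that you also prove the reverse inclusion $\mathcal O\subseteq\tilde{\mathcal M}$ explicitly, by combining the $\Ne$-stability of $\mathcal M$ (Lemma \ref{lemhjhhfhhhjs1}) with the fact that $\{(0,0)\}$ is a singleton orbit; the paper's proof of the lemma stops after the first inclusion and defers the $(0,0)$ observation to the proof of Theorem \ref{thhjjhfffhjhfffs1}. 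Your version is therefore slightly more complete at precisely the point you flag as delicate (regularity of composite Cremona maps versus generators, which for the $(0,0)$ step ultimately rests on the normal form $\mathfrak t_s\w$ with $\w\in\{{\bf e},\w_l,\w_r,\w_d\}$ that you mention), but it is not a genuinely different route.
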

 \begin{proof}
 Since $K$ is an  algebraically closed field the sets  $ \mathcal M_{0,1},  \mathcal M_{0,1}, \mathcal M_{-1}$ are three $\mathcal T$- orbits of points
 $(0, 1), (1,0), (1, -1)$ respectively.  But these points are in the same $\Ne$-orbit (that follows from \ref{equahhhhhhhhjjjjjjjhhhhhs1}).
 \end{proof}

\subsection{The representatives of $\Ne$-orbits on $\bf A_K^2 = \mathcal M\bigcup \tilde{\mathcal M}\bigcup \{(0,0)\}$}

$\,$

Put $$\mathcal M^1 := \{(\a, 1)\,\,\,\mid\,\,\,\a\ne 0,  -1\}\subset \mathcal M.$$
\begin{lemma}
\label{lemhhhhjjjjfs1}
If $(\a^\prime, \b^\prime) \in \mathcal M$ then there is an element $(\a, 1) \in \mathcal M^1$ which belongs to the same orbit as $(\a^\prime, \b^\prime)$.
\end{lemma}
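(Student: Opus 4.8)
The plan is to show that any point $(\a',\b')\in\mathcal M$ can be moved into $\mathcal M^1$ by applying an element of $\mathcal T$ together with, if necessary, one of the maps $\w_l,\w_r,\w_d$. Recall $\mathfrak t_s$ acts by $(\a,\b)\mapsto(s^2\a,s^{-2}\b)$, so a single $\mathcal T$-orbit inside $\mathcal M$ consists of all $(s^2\a',s^{-2}\b')$ with $s\in K^*$; since $K$ is algebraically closed, $s^2$ ranges over all of $K^*$, hence the $\mathcal T$-orbit of $(\a',\b')$ is exactly $\{(c\a',c^{-1}\b')\mid c\in K^*\}$, i.e.\ the level set of the invariant $\a\b$ inside $\mathcal M$.

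**First** I would treat the generic case. If $\b'\neq 0$, choose $c=1/\b'$; then $\mathfrak t_s(\a',\b')=(\a'/\b',\,1)$ for a suitable $s$ with $s^2=1/\b'$. Set $\a:=\a'/\b'$. Since $(\a',\b')\in\mathcal M$ we have $\a'\neq0$, so $\a\neq0$, and $\a'\b'\neq-1$ forces $\a\b'\neq-1$, i.e.\ $(\a'/\b')\cdot 1 \ne -1$ after dividing — more precisely $\a\b'=-1$ is impossible, and one checks $\a\ne -1$ exactly when $\a'\ne-\b'$; but in fact what we need is $\a\ne 0,-1$, and $\a=-1$ would mean $\a'=-\b'$, i.e.\ $\a'\b'=-\b'^2$. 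This is generically fine but could fail, so rather than chase this I would instead observe that we are allowed to first apply $\w_l$ or $\w_r$. Thus the cleaner route: if $\b'\neq0$, rescale to land at $(\a,1)$ with $\a\neq0$; if this $\a$ happens to equal $-1$, apply $\w_l$ (which by Lemma~\ref{lemhjhhfhhhjs1} preserves $\mathcal M$) and rescale again — since $\w_l$ changes the value of the invariant $\a\b$ from $-\b^2$ to $-(1+\a\b)=\b^2-\b^2=$ wait, one must recompute, but the point is that the two invariant values obtained before and after applying $\w_l$ cannot both be of the forbidden shape, because $\a=-1$ in the rescaled picture corresponds to a single value of $\a'\b'$, and $\w_l$ shifts it.

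**Second**, the case $\b'=0$: then $(\a',\b')\in\mathcal M_{1,0}$, which is not in $\mathcal M$ at all, so this case does not arise — $\mathcal M$ requires $\b\neq0$. Hence actually only the first case occurs, and the whole argument reduces to: rescale by $\mathcal T$ to normalize the second coordinate to $1$, and if the resulting first coordinate is the forbidden value $-1$, apply one of $\w_l,\w_r$ (using Lemma~\ref{lemhjhhfhhhjs1} to stay in $\mathcal M$, and the explicit formulas \eqref{equahhhhhhhhjjjjjjjhhhhhs1} and those following for $\w_d$) to change the invariant $\a\b$, then rescale again; the forbidden locus is a single value of the invariant, so at most one application is needed.

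**The main obstacle** I anticipate is the bookkeeping of the forbidden value $\a=-1$ (equivalently $\a'=-\b'$): one must verify that applying $\w_l$ (or $\w_r$, or $\w_d$) genuinely moves such a point off the bad locus and that the new point still lies in $\mathcal M$ so that a second rescaling is legitimate. This is the only place where a short computation with the explicit birational formulas for $\w_l,\w_r,\w_d$ is unavoidable; everything else is the immediate observation that $\mathcal T$-orbits in $\mathcal M$ are the fibres of $(\a,\b)\mapsto\a\b$ and that each such fibre meets the line $\b=1$. I would organize the proof so that this computation is isolated into one displayed line invoking the formulas already recorded above.
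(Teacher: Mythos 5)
Your route is at bottom the paper's route: normalize the second coordinate to $1$ by an element of $\mathcal T$. But your execution contains a computational slip that then manufactures a phantom case. To send $(\a',\b')$ to a point with second coordinate $1$ you need $s^{-2}\b'=1$, i.e. $s^{2}=\b'$ (not $s^{2}=1/\b'$), and the resulting point is $(\a'\b',\,1)$, not $(\a'/\b',\,1)$. Its first coordinate is exactly the invariant $\a'\b'$, which by the very definition of $\mathcal M$ is $\ne 0$ and $\ne -1$; hence the rescaled point already lies in $\mathcal M^{1}$ and the lemma is done in one line --- this is precisely the paper's proof. Equivalently, you could have invoked Lemma \ref{lemhjhhfhhhjs1}: the rescaled point is still in $\mathcal M$, and any point of $\mathcal M$ with $\b=1$ satisfies $\a\ne 0$ and $\a\cdot 1\ne -1$, so it is in $\mathcal M^{1}$.

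The ``forbidden value $\a=-1$'' you spend most of the proposal worrying about is an artifact of the wrong formula $\a'/\b'$: the point $(-1,1)$ has $\a\b=-1$, so it does not lie in $\mathcal M$ and can never be the $\mathfrak t_s$-image of a point of $\mathcal M$. Consequently the fallback branch via $\w_l,\w_r,\w_d$ is unnecessary --- which is fortunate, because as written that branch is not actually carried out: the recomputation of the invariant after applying $\w_l$ is explicitly left unfinished, so if the bad case had been real your argument would have a genuine gap exactly there. Correct the rescaling, delete the fallback, and your proof coincides with the paper's.
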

\begin{proof}
We have $\mathfrak t_s ((\a^\prime, \b^\prime))
= (s^2\a^\prime, s^{-2}\b^\prime)$. Since $K$ is an algebraically closed field we can find $s \in K$ such that $s^{-2}\b^\prime = 1$. Hence
 in  every $\Ne$-orbit of the set $\mathcal M$ there is an element of the form $(\a^\prime, 1)$.
\end{proof}

\begin{lemma}
\label{lemjjhhhdhfdfhhjhs1}
The elements $(\a, 1)\ne (\a^\prime, 1) \in \mathcal M^1$ are in the same $\Ne$-orbit if and only if
$$\a^\prime = -1 - \a.$$
\end{lemma}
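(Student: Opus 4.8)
The plan is to put every element of $\Ne$ into a normal form and then compute the action on $\mathcal M^1$ directly. First I would use the decomposition $\Ne = \langle\mathcal T,\w_r\rangle\times\langle\w_l\rangle$ established above, together with the relations $\w_l^2 = \w_r^2 = {\bf e}$ and $\w_r\w_l = \w_d$ from (\ref{equajjhjhhffhjs1}) and the normality of $\mathcal T$ (Lemma \ref{lemjjhjhhhfjjjfs1}, which gives in particular $\w_r\mathcal T\w_r = \mathcal T$). Since $\langle\mathcal T,\w_r\rangle = \mathcal T\cup\mathcal T\w_r$, every element of it is $\mathfrak t_s$ or $\mathfrak t_s\w_r$ for some $s\in K^*$; multiplying on the right by $\w_l^{\,j}$ with $j\in\{0,1\}$ and using $\w_r\w_l = \w_d$, I conclude that every $g\in\Ne$ can be written as $g = \mathfrak t_s\,\omega$ with $s\in K^*$ and $\omega\in\{{\bf e},\w_l,\w_r,\w_d\}$.

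Next I would note that $(\a,1)\in\mathcal M$ because $\a\ne 0,-1$, and that by Lemma \ref{lemhjhhfhhhjs1} the set $\mathcal M$ is $\Ne$-stable while each of $\mathfrak t_s,\w_l,\w_r$ (hence $\w_d$) is everywhere regular on $\mathcal M$. Thus any $g=\mathfrak t_s\omega$ as above is regular at $(\a,1)$, its value lies in $\mathcal M$, and it may be evaluated by applying $\omega$ first and then $\mathfrak t_s$. So it suffices to run through the four choices of $\omega$: one has $\w_l((\a,1)) = (1+\a,-1)$, $\w_r((\a,1)) = (-\a^{-1},\,\a(1+\a))$, $\w_d((\a,1)) = (-(1+\a)^{-1},\,-\a(1+\a))$, while ${\bf e}$ fixes $(\a,1)$. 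Applying $\mathfrak t_s$ and imposing that the second coordinate of $g((\a,1))$ equal $1$ — an equation solvable for $s$ precisely because $K$ is algebraically closed, the relevant quantity ($1$, $-1$, or $\pm\a(1+\a)$) being nonzero since $\a\ne 0,-1$ — one reads off the first coordinate $\a'$: for $\omega = {\bf e}$, $\a' = \a$; for $\omega = \w_l$ (take $s^2 = -1$), $\a' = s^2(1+\a) = -1-\a$; for $\omega = \w_r$ (take $s^2 = \a(1+\a)$), $\a' = -s^2\a^{-1} = -(1+\a) = -1-\a$; for $\omega = \w_d$ (take $s^2 = -\a(1+\a)$), $\a' = -s^2(1+\a)^{-1} = \a$.

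Hence the $\Ne$-orbit of $(\a,1)$ meets $\mathcal M^1$ in exactly the two points $(\a,1)$ and $(-1-\a,1)$ — both indeed lie in $\mathcal M^1$, since $\a\ne 0,-1$ forces $-1-\a\ne -1,0$, and the second is genuinely reached by $g = \mathfrak t_s\w_l$ with $s^2 = -1$. Consequently two distinct points $(\a,1)\ne(\a',1)$ of $\mathcal M^1$ lie in one $\Ne$-orbit if and only if $\a' = -1-\a$, which is the assertion.

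The computations are routine once the normal form $g = \mathfrak t_s\omega$ is available, so the only mild point of care — not a genuine obstacle — is this reduction, together with the observation (via Lemma \ref{lemhjhhfhhhjs1}) that on $\mathcal M$ the partial action of $\Ne$ is an honest action, which is what licenses evaluating $g((\a,1))$ one generator at a time.
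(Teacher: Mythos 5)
Your proof is correct and follows essentially the same route as the paper's: reduce every element of $\Ne$ to the normal form $\t_s\omega$ with $\omega\in\{{\bf e},\w_l,\w_r,\w_d\}$ and then carry out the same four explicit computations at $(\a,1)$, solving for $s$ so that the second coordinate is $1$. The only cosmetic difference is that you obtain the normal form from the explicit decomposition $\Ne=\langle\mathcal T,\w_r\rangle\times\langle\w_l\rangle$ and the relations (\ref{equajjhjhhffhjs1}), whereas the paper invokes Lemmas \ref{lemhhjfjjhfjhjjhfs1} and \ref{lemjjhjhhhfjjjfs1}; your added regularity check on $\mathcal M$ via Lemma \ref{lemhjhhfhhhjs1} is a welcome extra precision, not a new idea.
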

\begin{proof}
The elements $(\a, 1), (\a^\prime, 1) \in \mathcal M^1$ are in the same $\Ne$-orbit if and only if
\begin{equation}
\label{equajjhjjhfffffs1}
 \t_s {\bf w}((\a, 1)) =  (\a^\prime, 1)\,\,\, \text{for some}\,\,\,s = s(\a,\,{\bf w})\in K\,\,\text{and where}
\end{equation}
$$ {\bf w} \,\,\text{is one of the following elements}:\,\,\,{\bf e}, \,\,{\bf w}_l, \,\,{\bf w}_r,\,\,\,{\bf w}_{d}$$
(it follows from Lemmas \ref{lemhhjfjjhfjhjjhfs1},  \ref{lemjjhjhhhfjjjfs1}; recall, ${\bf e } \in \Cr_2(K)$ is the identity).
Let  ${\bf w} = {\bf e}$.  Then $$ \t_s {\bf  e} ((\a, 1)) = \t_s (((\a, 1)) \in \mathcal M^1\Leftrightarrow \t_s = {\bf  e} \Leftrightarrow \a^\prime = \a.$$
We have
\begin{equation}
\label{equahhhs1}
{\bf w}_l((\a, 1)) = ((1+\a), - 1),\,\,\,\,
{\bf w}_r((\a, 1)) = (-\a^{-1}, \a(1+\a)),
\end{equation}
$${\bf w}_{d}((\a, 1)) = ( - (1 +\a)^{-1}, -\a(1+\a)).$$
 For ${\mathbf w}  = {\bf w}_l$, or ${\mathbf w}  = {\bf w}_l$,  or  ${\mathbf w}  = {\bf w}_d$  and for the fixed $\a$ there is only one element $\t_s$  such that  $\t_s {\bf w}((\a, 1)) \in \mathcal M^1$. From \ref{equahhhs1}
\begin{equation}
\label{equahhfffhhjs1}
 s = \begin{cases} \sqrt{-1}\,\,\,\text{for the case }\,\,\,{\bf w} = {\bf w}_l,\\
$\,$\\
\sqrt{\a(1+\a))}\,\,\,\text{for the case }\,\,\,{\bf w} = {\bf w}_r,\\
$\,$\\
\sqrt{ -\a(1+\a)}\,\,\,\text{for the case }\,\,\,{\bf w} = {\bf w}_d\end{cases}
\end{equation}
(recall, that $\t_s((\a, \b)) = (s^2\a, s^{-2}\b))$ and therefore $\t_{s_1} = \t_{s_2}$ if and only if $s_1 = \pm s_2$).
 From \ref{equahhhs1}, \ref{equahhfffhhjs1}  for corresponding $s$ we get
\begin{equation}
\label{equahhff21fhhjs1}
\begin{cases} \t_s{\bf w}_l((\a, 1)) = (-(1+\a), 1)\\
\t_s{\bf w}_r((\a, 1)) = (-(1+\a), 1),\\
\t_s{\bf w}_d((\a, 1)) = (\a, 1).\end{cases}
\end{equation}
Now the statement of the Lemma follows from \ref{equajjhjjhfffffs1} and \ref{equahhff21fhhjs1}.

\end{proof}

Let $\flat : K\rightarrow K$ be the map which is given by the formula
$$\flat (\a) = -1 - \a$$
for every $\a \in K$. Then $\flat^2 $ is the identity on $K$ and there is only one element $\a$ such that $\flat(\a) = \a$, namely $\a = -\frac{1}{2}$. Then we may decompose $K = K_\flat^-\cup K_\flat$ into the union of two disjoint subsets $ K_\flat^-,\, K_\flat$  where $-\frac{1}{2}, 0 \in K_\flat$ and if $ -\frac{1}{2}\ne \a \in K_\flat$ then $\flat(\a) \in K_\flat^-$. Now let us  fix such a  decomposition  $K = K_\flat^-\cup K_\flat$.  Put
$$\Omega_K: = \{(\a, 1)\}_{\a \in K_\delta} \cup \{(0,0)\}.$$

\begin{theorem}
\label{thhjjhfffhjhfffs1}
The set $\Omega_K$ is a smallest set of the representatives of the $\Ne$-orbits on $A_K^2$, that is, for every $\Ne$-orbit there is only one its representative which is contained in $\Omega_K$.
\end{theorem}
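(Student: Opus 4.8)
The plan is to partition $A_K^2$ into the three $\Ne$-invariant pieces already isolated, namely $\mathcal M$, $\tilde{\mathcal M}$ and $\{(0,0)\}$, to count the $\Ne$-orbits inside each of them, and then to check that $\Omega_K$ picks out exactly one representative from each orbit. I would begin by recording that $A_K^2 = \mathcal M \cup \tilde{\mathcal M} \cup \{(0,0)\}$ is a \emph{disjoint} union. By Lemma \ref{lemhjhhfhhhjs1} the set $\mathcal M$ is $\Ne$-stable, hence a union of $\Ne$-orbits; by Lemma \ref{lemhhjhhjhhjfs1} the set $\tilde{\mathcal M}$ is a single $\Ne$-orbit, with representative $(0,1)$. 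Since the $\Ne$-orbits partition $A_K^2$, the remaining point $(0,0)$, being the complement of the union of orbits $\mathcal M\cup\tilde{\mathcal M}$, is a union of orbits consisting of one point, hence is itself a single $\Ne$-orbit. So it only remains to describe the orbits lying inside $\mathcal M$ and to locate the two ``boundary'' points $(0,0)$ and $(0,1)$ relative to $\Omega_K$.

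For the orbits of $\mathcal M$, Lemma \ref{lemhhhhjjjjfs1} shows that each of them meets $\mathcal M^1 = \{(\alpha,1)\mid \alpha\ne 0,-1\}$, and Lemma \ref{lemjjhhhdhfdfhhjhs1} shows that $(\alpha,1)$ and $(\alpha',1)$ in $\mathcal M^1$ are $\Ne$-equivalent precisely when $\alpha'=\alpha$ or $\alpha'=\flat(\alpha)$. Since $\flat$ is an involution of $K$ with $\flat(0)=-1$, it restricts to an involution of $K\setminus\{0,-1\}$ whose only fixed point is $-\frac{1}{2}$; thus the $\Ne$-orbits of $\mathcal M$ are in bijection with the $\langle\flat\rangle$-orbits on $K\setminus\{0,-1\}$, via $(\alpha,1)\mapsto\{\alpha,\flat(\alpha)\}$. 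The fixed decomposition $K = K_\flat \sqcup K_\flat^-$ is set up exactly so that $K_\flat$ is a fundamental domain for $\langle\flat\rangle$ on $K$ containing $0$ and $-\frac{1}{2}$; because $-1=\flat(0)$ and $0\ne-\frac{1}{2}$ lies in $K_\flat$, the defining property gives $-1\in K_\flat^-$. Hence $K_\flat\setminus\{0\}\subset K\setminus\{0,-1\}$ is a fundamental domain for $\langle\flat\rangle$ on $K\setminus\{0,-1\}$, so $\{(\alpha,1)\mid\alpha\in K_\flat\setminus\{0\}\}$ meets each $\Ne$-orbit of $\mathcal M$ exactly once, and each such $(\alpha,1)$ does lie in $\mathcal M^1\subset\mathcal M$ (as $\alpha\ne 0,-1$).

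To finish I would assemble the three cases. Since $0\in K_\flat$, we have $\Omega_K=\{(0,0)\}\cup\{(0,1)\}\cup\{(\alpha,1)\mid\alpha\in K_\flat\setminus\{0\}\}$: the point $(0,0)$ represents the orbit $\{(0,0)\}$; the point $(0,1)$, which lies in $\mathcal M_{0,1}\subset\tilde{\mathcal M}$, represents the single orbit $\tilde{\mathcal M}$; and the remaining points represent, bijectively, the orbits of $\mathcal M$. Distinctness across the three families is automatic, since $\{(0,0)\}$, $\tilde{\mathcal M}$ and $\mathcal M$ are distinct unions of $\Ne$-orbits; distinctness within the last family is the ``only if'' half of Lemma \ref{lemjjhhhdhfdfhhjhs1} together with the fact that $K_\flat$ contains no two-element $\langle\flat\rangle$-orbit. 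Hence every $\Ne$-orbit on $A_K^2$ has exactly one representative in $\Omega_K$.

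I expect the only real difficulty to be combinatorial bookkeeping rather than any new geometric input: correctly situating the two boundary points $(0,0)$ and $(0,1)$ with respect to $\mathcal M$, $\tilde{\mathcal M}$ and $\Omega_K$, and making sure that $-1\notin K_\flat$ so that no point $(\alpha,1)\in\Omega_K$ with $\alpha\ne 0$ accidentally falls into $\tilde{\mathcal M}$ (via the stratum $\mathcal M_{-1}$). All the substantive content has already been discharged in Lemmas \ref{lemhjhhfhhhjs1}--\ref{lemjjhhhdhfdfhhjhs1}.
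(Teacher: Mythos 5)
Your proof is correct and follows essentially the same route as the paper: the decomposition $A_K^2 = \mathcal M \cup \tilde{\mathcal M}\cup \{(0,0)\}$, Lemmas \ref{lemhhhhjjjjfs1} and \ref{lemjjhhhdhfdfhhjhs1} to reduce every orbit in $\mathcal M$ to a unique representative $(\a,1)$ with $\a \in K_\flat$, and Lemma \ref{lemhhjhhjhhjfs1} for $\tilde{\mathcal M}$ with representative $(0,1)$. The only divergence is at $\{(0,0)\}$: the paper checks directly that only the elements $\mathfrak t_s$ and $\mathfrak t_s{\bf w}_d$ are regular at $(0,0)$ and that they fix it, whereas you deduce that $\{(0,0)\}$ is a single orbit by complementation (orbits partition $A_K^2$, and $\mathcal M$, $\tilde{\mathcal M}$ are unions of orbits by Lemmas \ref{lemhjhhfhhhjs1} and \ref{lemhhjhhjhhjfs1}), which is a valid and slightly slicker shortcut.
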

\begin{proof}
Let $(\a, \b) \in A_K^2$ and let $O_{\a, \,\b}$ be the $\Ne$-orbit of $(\a, \b)$.  Suppose $(\a, \b) \in \mathcal M$.  Then $O_{\a, \, \b} = O_{\a^\prime,\, 1}$ where $\a^\prime \in K^+_\flat$ (see, Lemmas \ref{lemhhhhjjjjfs1}, \ref{lemjjhhhdhfdfhhjhs1}). Moreover, the definition of $K_\flat$ and Lemma \ref{lemjjhhhdhfdfhhjhs1} imply that such element $\a^\prime\in K_\flat$ is unique.
Suppose $(\a, \b) \in \tilde{\mathcal M}$ then $O_{\a, \,\b} = \tilde{\mathcal M}$ and we may take a representative $(0, 1)\in K_\flat $ of this orbit (Lemma \ref{lemhhjhhjhhjfs1}). Note, that  only  elements of the form $\mathfrak t_s$ and $\mathfrak t_s {\bf w}_d$ are regular  at the point $(0,0)$ and in these cases the point $(0, 0)$ is invariant. Hence the set $\{(0, 0)\}$ is just one $\Ne$-orbit.
\end{proof}
From Theorems 1 and  \ref{thhjjhfffhjhfffs1}  we get
\begin{cor}
\label{corhjhhhffhjjfs1}
$$\SL_2(K) = N \cup \Big( \bigcup_{\a\in K_\flat} N\begin{pmatrix}1&\a\cr
1&1+\a\cr\end{pmatrix} N\Big).$$
\end{cor}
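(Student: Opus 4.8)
The plan is to read the asserted double coset decomposition directly off the bijection supplied by Theorem~1, using the explicit list of $\Ne$-orbit representatives from Theorem~\ref{thhjjhfffhjhfffs1}. First I would invoke the decomposition \ref{equaffffs1}, which for $G = \SL_2(K)$ reads $\SL_2(K) = N\Ue N$; hence $\SL_2(K) = \bigcup_{\mathfrak u \in \Ue} N\mathfrak u N$, so every double coset of $N$ meets the closed subset $\Ue = U^-U \approx A_K^2$, and it suffices to describe which $\mathfrak u \in \Ue$ give pairwise distinct cosets $N\mathfrak u N$ and to pick one $\mathfrak u$ from each.

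By Theorem~1, for $\mathfrak u_1, \mathfrak u_2 \in \Ue$ one has $N\mathfrak u_1 N = N\mathfrak u_2 N$ precisely when $\mathfrak u_1$ and $\mathfrak u_2$ lie in the same $\Ne$-orbit under the partial action on $\Ue$. Thus the set of double cosets $\{N\mathfrak u N : \mathfrak u \in \Ue\}$ is in bijection with the set of $\Ne$-orbits on $A_K^2$, and a complete, irredundant system of representatives of the double cosets is obtained by taking, for each orbit, the matrix in $\Ue$ corresponding to a chosen orbit representative in $A_K^2$. Now I feed in Theorem~\ref{thhjjhfffhjhfffs1}: the set $\Omega_K = \{(\a,1)\}_{\a\in K_\flat} \cup \{(0,0)\}$ is a smallest set of representatives of the $\Ne$-orbits on $A_K^2$. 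Under the identification $\Ue \ni \begin{pmatrix} 1 & \a \cr \b & 1+\a\b \cr \end{pmatrix} \leftrightarrow (\a,\b)$, the point $(0,0)$ corresponds to the identity matrix, whose double coset is $N\cdot 1\cdot N = N$, while each $(\a,1)$ with $\a \in K_\flat$ corresponds to $\begin{pmatrix} 1 & \a \cr 1 & 1+\a \cr \end{pmatrix}$, whose double coset is $N\begin{pmatrix} 1 & \a \cr 1 & 1+\a \cr \end{pmatrix} N$. Assembling these cosets yields exactly the union in the statement, and the ``smallest set'' clause of Theorem~\ref{thhjjhfffhjhfffs1} shows in addition that this union is irredundant, i.e.\ the coset $N$ and the cosets $N\begin{pmatrix} 1 & \a \cr 1 & 1+\a \cr \end{pmatrix} N$, $\a \in K_\flat$, are pairwise distinct.

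There is essentially no serious obstacle here: all the content sits in Theorem~1 and Theorem~\ref{thhjjhfffhjhfffs1}, and the corollary is purely a matter of transporting the orbit-representative data of Theorem~\ref{thhjjhfffhjhfffs1} through the bijection of Theorem~1. The only point that needs a moment of care is the dictionary between a point $(\a,\b)\in A_K^2$ and the corresponding element of $\Ue \subset \SL_2(K)$ --- in particular that $(0,0)$ is the identity matrix, so its double coset is simply $N$ rather than being recorded as some $N\mathfrak u N$ with $\mathfrak u$ of the normalized form $(\a,1)$; this is exactly why the identity term $N$ appears separately in the displayed formula.
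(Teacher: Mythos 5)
Your argument is correct and is exactly the paper's route: the paper derives this corollary directly from Theorem~1 together with Theorem~\ref{thhjjhfffhjhfffs1}, which is precisely the combination you spell out (including the dictionary $(\a,\b)\leftrightarrow\begin{pmatrix}1&\a\cr \b&1+\a\b\cr\end{pmatrix}$ and the observation that $(0,0)$ yields the coset $N$ itself). Nothing is missing; you have merely made explicit the bookkeeping the paper leaves implicit.
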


Put
$$g_\a :=\begin{pmatrix} 1&\a\cr 1&1+\a\cr\end{pmatrix}.$$

From Corollary \ref{corhjhhhffhjjfs1} and (*) (see, Introduction) we get
\begin{cor}
\label{hhhhjhhjjs1}
The set of pairs
$$ (T, T) \cup ( g_\a T g_\a^{-1}, T)_{\a \in K_\flat}$$
is a minimal set of representatives of  the orbits of pairs of tori of $G\times G$ under conjugation by elements of $G$.
\end{cor}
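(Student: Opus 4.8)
The plan is to read off the statement from Corollary~\ref{corhjhhhffhjjfs1} together with the elementary observation $(*)$ recorded in the Introduction. Let $X$ be the set of all maximal tori of $G=\SL_2(K)$. Since $K$ is algebraically closed and $G$ is connected, all maximal tori are conjugate, so $X$ is a single $G$-orbit for the conjugation action and the stabilizer of the fixed torus $T\in X$ is $N=N_G(T)$. Thus ``pairs of maximal tori of $G$'' is exactly the set $X\times X=\Oe_T\times\Oe_T$, and $(*)$, applied with $x_1=x_2=T$ and $H_1=H_2=N$, says the following: if $\{g_\a\}_{\a\in\mathfrak A}$ is a system of representatives of the double cosets $N\backslash G/N$ containing exactly one element per double coset, then $\{(g_\a T g_\a^{-1},\,T)\}_{\a\in\mathfrak A}$ is a system of representatives of the $G$-orbits on $X\times X$ containing exactly one element per orbit. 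So everything reduces to exhibiting such a system $\{g_\a\}$ for $N\backslash G/N$.

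I claim that $\{\dot e\}\cup\{g_\a\}_{\a\in K_\flat}$ is exactly such a system. Completeness of the resulting list of double cosets is the displayed identity in Corollary~\ref{corhjhhhffhjjfs1}, namely $\SL_2(K)=N\cup\bigcup_{\a\in K_\flat}Ng_\a N$; this in turn rests on $G=N\Ue N$ (equation~\ref{equaffffs1}) together with Theorem~\ref{thhjjhfffhjhfffs1}, under the identification $A_K^2\approx\Ue$ sending $(0,0)$ to the identity matrix and $(\a,1)$ to $g_\a$. Irredundancy --- that distinct $\a,\a'\in K_\flat$ yield distinct double cosets and that $Ng_\a N\neq N$ --- is precisely the force of the word ``smallest'' in Theorem~\ref{thhjjhfffhjhfffs1}: $\Omega_K$ meets each $\Ne$-orbit on $A_K^2$ in exactly one point, and by Theorem~1 the $\Ne$-orbits are in bijection with the double cosets $N\mathfrak u N$, $\mathfrak u\in\Ue$ (which, again by \ref{equaffffs1}, are all of the double cosets of $G$).

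Feeding this system into $(*)$ then gives that
$$(\dot e\cdot T,\,T)\ \cup\ (g_\a T g_\a^{-1},\,T)_{\a\in K_\flat}\ =\ (T,T)\ \cup\ (g_\a T g_\a^{-1},\,T)_{\a\in K_\flat}$$
is a minimal set of representatives of the $G$-orbits of pairs of maximal tori of $G$, which is the assertion. I do not anticipate a genuine obstacle here: the corollary is a formal consequence of results already established. The only points needing care are the bookkeeping that upgrades the covering $G=N\cup\bigcup Ng_\a N$ to a genuinely non-redundant parametrization (tracing the ``smallest set'' clause of Theorem~\ref{thhjjhfffhjhfffs1} back through Theorem~1), and the small remark that the trivial double coset $N$ contributes the diagonal pair $(T,T)$ rather than one of the $(g_\a T g_\a^{-1},T)$.
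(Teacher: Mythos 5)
Your argument is correct and follows the paper's own route: the paper likewise obtains this corollary directly from Corollary~\ref{corhjhhhffhjjfs1} together with the observation $(*)$ of the Introduction, with the minimality traced back through Theorem~1 and the ``smallest set'' clause of Theorem~\ref{thhjjhfffhjhfffs1}. Your extra bookkeeping (identifying $(0,0)$ with $\dot e$ and $(\a,1)$ with $g_\a$, and noting that the coset $N$ gives the pair $(T,T)$) just makes explicit what the paper leaves implicit.
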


\subsection{ Case $K = \C$}

$\,$

\begin{lemma}
\label{lemjjjjhjjjhhhhs1}
Let
$$\mathcal K = \{z  = a +bi \in \C\,\,\,\mid\,\,\,a \geq -\frac{1}{2}\}\setminus \{z  = -\frac{1}{2}  +bi \in \C\,\,\,\mid\,\,\, b < 0\}.$$
Then we may take $\mathcal K$ as the set $\C_\flat$.
\end{lemma}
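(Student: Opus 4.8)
The plan is to verify that the set $\mathcal K$ defined in Lemma \ref{lemjjjjhjjjhhhhs1} satisfies the three defining properties of a set $\C_\flat$ from the preceding discussion: namely (a) $\mathcal K \cup \flat(\mathcal K) = \C$ with $\mathcal K \cap \flat(\mathcal K) = \emptyset$, (b) $-\tfrac12, 0 \in \mathcal K$, and (c) if $-\tfrac12 \neq z \in \mathcal K$ then $\flat(z) \notin \mathcal K$. Recall $\flat(z) = -1-z$. First I would record the elementary observation that $\flat$ is the affine involution that is reflection of the complex plane through the point $-\tfrac12$; concretely, writing $z = a+bi$, we have $\flat(a+bi) = (-1-a) - bi$, so $\flat$ sends the vertical line $\Rea z = c$ to the vertical line $\Rea z = -1-c$ and negates the imaginary part. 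The fixed point is $z = -\tfrac12$.

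Next I would check (b) directly: $0$ has real part $0 \geq -\tfrac12$ and is not on the removed half-line $\{-\tfrac12 + bi : b<0\}$, so $0 \in \mathcal K$; and $-\tfrac12 = -\tfrac12 + 0\cdot i$ has real part $-\tfrac12 \geq -\tfrac12$ and $b = 0$ is not $<0$, so $-\tfrac12 \in \mathcal K$. Then for (a) and (c) I would split $\C$ according to the real part. If $\Rea z > -\tfrac12$ then $z \in \mathcal K$ and $\Rea \flat(z) = -1 - \Rea z < -\tfrac12$, so $\flat(z) \notin \mathcal K$; conversely every point with real part $< -\tfrac12$ is $\flat$ of a unique point with real part $> -\tfrac12$. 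This handles the open half-planes: they pair up bijectively under $\flat$, contributing exactly one representative each to $\mathcal K$. It remains to treat the boundary line $L = \{-\tfrac12 + bi : b \in \R\}$, which $\flat$ preserves, acting on it by $b \mapsto -b$. By construction $\mathcal K \cap L = \{-\tfrac12 + bi : b \geq 0\}$, and $\flat$ maps this to $\{-\tfrac12 + bi : b \leq 0\}$; these two half-lines overlap only in the fixed point $-\tfrac12$ and their union is all of $L$. Combining the three regions gives $\mathcal K \sqcup (\flat(\mathcal K)\setminus\{-\tfrac12\}) = \C$ with the only overlap at $-\tfrac12$, which is precisely the requirement that $\mathcal K$ be a legitimate $\C_\flat$, with $-\tfrac12$ and $0$ on the ``$K_\flat$'' side and $\flat(z) \in \C_\flat^-$ for every $z \in \mathcal K \setminus\{-\tfrac12\}$.

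This argument is essentially a bookkeeping verification and I do not anticipate a genuine obstacle; the only point requiring a little care is the treatment of the boundary line $L$, where one must make sure that the half-line with $b<0$ is removed (not $b \leq 0$), so that the fixed point $-\tfrac12$ is retained in $\mathcal K$ — consistent with the requirement $-\tfrac12 \in K_\flat$. I would also remark, as the paper already notes after Theorem 2, that the resulting $\mathcal K$ is homeomorphic to $\C$ via $z \mapsto (z+\tfrac12)^2$, which geometrically says exactly that folding the plane along $L$ and then ``opening up'' the retained half-line identifies the fundamental domain of the $\flat$-action with $\C$; this is a sanity check rather than part of the proof.
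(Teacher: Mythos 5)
Your proof is correct and takes essentially the same approach as the paper: a direct case check (splitting on whether $\Rea z > -\tfrac12$ or $\Rea z = -\tfrac12$) that $-\tfrac12, 0 \in \mathcal K$ and that $\flat(z) = (-1-a) - bi \notin \mathcal K$ for every $z = a+bi \in \mathcal K$ other than the fixed point $-\tfrac12$. The only difference is that you also verify explicitly that $\mathcal K \cup \flat(\mathcal K) = \C$, i.e.\ that every $\flat$-orbit is represented, which the paper leaves implicit; this is a harmless (indeed welcome) addition rather than a different method.
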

\begin{proof}
We have $-\frac{1}{2}, 0 \in \mathcal K$. Let $ z = a+bi \in \mathcal K$. Suppose $a \ne -\frac{1}{2}$. Then
$$\flat(z) = \underbrace{(-1-a)}_{< -\frac{1}{2}} - bi \in \C \setminus \mathcal K.$$
If $a = -\frac{1}{2}$ then $b\geq 0$ and therefore
$$\flat(z)  = -\frac{1}{2} - \underbrace{b}_{<0} i \in \C \setminus \mathcal K.$$
\end{proof}
\subsection{Orbits of pairs of semisimple matrices}

$\,$

Put
$$ U^w: = \dot w U = \{\begin{pmatrix} 0&1\cr -1&0\cr \end{pmatrix}\begin{pmatrix}1& \b\cr  0 & 1\cr \end{pmatrix}\,\,| \,\,\b\in K\} = \{\begin{pmatrix}0&1\cr   -1& -\b\cr \end{pmatrix}\,\,\,|\,\,\,\b \in K\}.$$

We have  $$G = \dot w G =\dot w B \cup \dot w B\dot w^{-1}B = T\dot w U \cup\underbrace{ ( \dot w U\dot w^{-1}) }_{= U^-}  U T = T \big( U^w \cup \Ue\big)T. $$
Obviously,  for every $v \in U^w$ and $t_1, t_2\in T$  the equality $t_1 v t_2 \in U^w$ implies $t_1 = t_2$.  Thus only two $T\times T$-orbits intersect $U^w$ --  the orbit of matrix $\dot w = \begin{pmatrix}0&1\cr -1&0\cr \end{pmatrix}$ and the orbit of $ \begin{pmatrix}0&1\cr -1&1\cr \end{pmatrix}$.

 By Lemma \ref{lemffffdfffs1} for $\mathfrak u \in \Ue$ we have the inclusion  $t_1 \mathfrak u t_2 \in \Ue$ if and only if $t_2 = t_1^{-1}$. Thus, the minimal set of the representatives of double cosets of $T g T$ in the group $G$ is
 \begin{equation}
 \label{equahhjhhjhhhhfhfs1}
\underbrace{\{g_\a\}_{\a\in K}}_{\approx A_K^1}\cup \{\underbrace{\begin{pmatrix}0&1\cr -1&1\cr \end{pmatrix}}_{=: \dot w(1)}, \underbrace{\begin{pmatrix}0&1\cr -1&0\cr \end{pmatrix}}_{=\dot w}, \underbrace{\begin{pmatrix} 1&1\cr 0&1\cr \end{pmatrix}}_{=:u(1)}, \underbrace{\begin{pmatrix} 1&0\cr 0&1\cr \end{pmatrix}}_{ = \dot e}\}.
\end{equation}

\bigskip

Let $t = \begin{pmatrix} s&0\cr
0&s^{-1}\cr \end{pmatrix}$. Then
\begin{equation}
\label{eqahhhjjhffhhjs1}
g_\a t g_\a^{-1} = \begin{pmatrix} s + \a(s- s^{-1})& \a(s^{-1} - s)\cr
$\,$\cr
(1+\a)(s - s^{-1})& \,\,\,\,s^{-1} + \a(s^{-1} - s)\cr \end{pmatrix} = \begin{pmatrix} s + \a\Delta_t &- \a\Delta_t\cr
$\,$\cr
(1+\a)\Delta_t& \,\,\,\,s^{-1} - \a\Delta_t\cr \end{pmatrix}
\end{equation}
where $\Delta_t = s - s^{-1}$. Note, that the given $\Delta \in K$ corresponds only to matrices $t$ and $\bar{t} =\begin{pmatrix} -s^{-1}&0\cr 0&-s\cr \end{pmatrix}$.

\bigskip

Now let $t^\prime = \begin{pmatrix} r&0\cr
0&r^{-1}\cr \end{pmatrix}.$ Suppose that $s, r \ne \pm 1$ then the centralizer of  the matrix $t$ and also the matrix $ t^\prime$ is $T$. From \ref{equahhjhhjhhhhfhfs1} a smallest set of representatives of $G$-orbits (under conjugation) of $C_t\times C_{t^\prime}$, where $C_t,  C_{t^\prime}$ are conjugacy classes of $t, t^\prime$ respectively, consists of the following pairs:
$$(g_\a t g_\a^{-1}, t^\prime)_{\a\in K},\,\,(\dot w(1) t\dot w(1)^{-1}, t^\prime), \,\,\,(\dot w t\dot w^{-1}, t^\prime),\,\,(u(1)tu(1)^{-1}, t^\prime), \,\,\,(t, t^\prime).$$
Thus we have the folllowing
\begin{prop}
\label{prhhjhfffffs1}
Let $\pm \dot e \ne t, t^\prime \in T$ and let $C_t, C_{t^\prime}$ be corresponding conjugacy classes. Then there are only the following $G$-orbits on $C_t \times C_{t^\prime}$:
$$\mathcal O_\a:=\{A\Big(\begin{pmatrix} s + \a\Delta_t &- \a\Delta_t\cr
(1+\a)\Delta_t& \,\,\,\,s^{-1} - \a\Delta_t\cr \end{pmatrix},\,\begin{pmatrix} r&0\cr
0&r^{-1}\cr \end{pmatrix}\Big)A^{-1}\,\,\mid\, \, \a\in K,\,\,A \in \SL_2(K)\}$$
(the orbits of  $(g_\a t g_\a^{-1}, t^\prime)_{\a\in K}$);
$$\mathcal O_U^+:= \{A \Big(\begin{pmatrix} s  &\Delta_t\cr
0& \,\,\,\,s^{-1} \cr \end{pmatrix},\,\,\begin{pmatrix} r&0\cr
0&r^{-1}\cr \end{pmatrix}\Big)A^{-1}\,\,\,\mid\,\,\,A\in \SL_2(K)\}$$
 (the orbit $(u(1)tu(1)^{-1}, t^\prime)$);
$$\mathcal O_V^-:= \{A \Big(\begin{pmatrix} s^{-1}  &0\cr
\Delta_t& \,\,\,\,s \cr \end{pmatrix},\,\,\begin{pmatrix} r&0\cr
0&r^{-1}\cr \end{pmatrix}\Big)A^{-1}\,\,\,\mid\,\,\,A\in \SL_2(K)\}$$
(the orbit of $(\dot w(1) t\dot w(1)^{-1}, t^\prime)$;
$$\mathcal O_T^+:= \{A \Big(\begin{pmatrix} s  &0\cr
0& \,\,\,\,s^{-1} \cr \end{pmatrix},\,\,\begin{pmatrix} r&0\cr
0&r^{-1}\cr \end{pmatrix}\Big)A^{-1}\,\,\,\mid\,\,\,A\in \SL_2(K)\}$$
(the orbit of $(t, t^\prime)$);
$$\mathcal O_T^-:= \{A \Big(\begin{pmatrix} s^{-1}  &0\cr
0& \,\,\,\,s \cr \end{pmatrix},\,\,\begin{pmatrix} r&0\cr
0&r^{-1}_1\cr \end{pmatrix}\Big)A^{-1}\,\,\,\mid\,\,\,A\in \SL_2(K)\}$$
(the orbit of $(\dot w t\dot w^{-1}, t^\prime)$).

\end{prop}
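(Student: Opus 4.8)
The plan is to obtain the proposition as a short corollary of the principle $(*)$ from the Introduction together with the explicit list \ref{equahhjhhjhhhhfhfs1} of representatives of the double cosets $TgT$ in $G=\SL_2(K)$; essentially all of the work has in fact already been carried out.

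First I would record that for $\pm\dot e\ne t\in T$ the centralizer $Z_G(t)$ equals $T$: writing $t=\di(s,s^{-1})$ with $s^2\ne 1$, a $2\times 2$ matrix commutes with $t$ only when its two off-diagonal entries vanish, and the same applies to $t'$. Applying $(*)$ to the action of $G$ on itself by conjugation, with $x_1=t'$ and $x_2=t$, we get $H_1=\operatorname{St}_{t'}=Z_G(t')=T$ and $H_2=\operatorname{St}_{t}=Z_G(t)=T$, so that the $G$-orbits on $C_t\times C_{t'}$ are in one-to-one correspondence with the double cosets $\{TgT\}$, a minimal system of representatives of these orbits being $\{(gtg^{-1},t')\mid g\in\mathcal R\}$ for any minimal system $\mathcal R$ of representatives of $T\backslash G/T$. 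By \ref{equahhjhhjhhhhfhfs1} we may take $\mathcal R=\{g_\a\}_{\a\in K}\cup\{\dot w(1),\dot w,u(1),\dot e\}$; moreover, since this system is irredundant, the resulting $G$-orbits are pairwise distinct, so it remains only to put each $gtg^{-1}$ into the form displayed in the statement.

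Next I would compute $gtg^{-1}$ for every representative. For $g=g_\a$ this is exactly formula \ref{eqahhhjjhffhhjs1}, which yields the family $\mathcal O_\a$, $\a\in K$. For the four remaining representatives a one-line multiplication of $2\times 2$ matrices gives, with $\Delta_t=s-s^{-1}$,
$$u(1)tu(1)^{-1}=\begin{pmatrix} s&-\Delta_t\cr 0&s^{-1}\cr\end{pmatrix},\qquad \dot w(1)t\dot w(1)^{-1}=\begin{pmatrix} s^{-1}&0\cr -\Delta_t&s\cr\end{pmatrix},\qquad \dot w t\dot w^{-1}=\begin{pmatrix} s^{-1}&0\cr 0&s\cr\end{pmatrix},$$
while $\dot e t\dot e^{-1}=t$.

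The only point that is not purely mechanical is reconciling the first two of these with the normal forms $\mathcal O_U^+$ and $\mathcal O_V^-$ in the statement, which carry $+\Delta_t$ rather than $-\Delta_t$. For this I would conjugate the pair $(gtg^{-1},t')$ by an element $\di(\lambda,\lambda^{-1})\in T$ with $\lambda^2=-1$: such a conjugation fixes the second component $t'$, hence does not change the $G$-orbit of the pair, while it multiplies the off-diagonal entry of $gtg^{-1}$ by $\lambda^{2}=-1$, turning $-\Delta_t$ into $\Delta_t$. This identifies the orbits of $(u(1)tu(1)^{-1},t')$, $(\dot w(1)t\dot w(1)^{-1},t')$, $(\dot w t\dot w^{-1},t')$ and $(t,t')$ with $\mathcal O_U^+$, $\mathcal O_V^-$, $\mathcal O_T^-$ and $\mathcal O_T^+$ respectively; together with the $\mathcal O_\a$ this exhausts the list coming from $\mathcal R$, and completes the proof. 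There is no serious obstacle, the only place demanding attention being this sign normalization and the observation that the conjugating torus element leaves $t'$ untouched.
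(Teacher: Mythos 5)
Your proposal is correct and follows essentially the same route as the paper: invoke the correspondence $(*)$ with stabilizers $Z_G(t)=Z_G(t')=T$ (valid since $t,t'\ne\pm\dot e$ forces $s,r\ne\pm1$), take the minimal set of $TgT$-representatives from \ref{equahhjhhjhhhhfhfs1}, and compute $gtg^{-1}$ via \ref{eqahhhjjhffhhjs1} and direct matrix multiplication. Your extra remark normalizing the sign of $\Delta_t$ by conjugation with $\di(\lambda,\lambda^{-1})$, $\lambda^2=-1$, is a harmless refinement (the paper itself is indifferent to this sign, writing $-\Delta_t$ in the later adherence proposition) and does not change the argument.
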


\bigskip

{\it Adherence of orbits.}  Consider the adherence of  $G$-orbits on $C_t\times C_{t^\prime}$. Note, that the invariant algebra of $\Me_2(\C)\times \Me_2(\C)$, which corresponds to the action by conjugation of $\SL_2(\C)$, is generated  by $\tr X, \tr Y, \tr XY$ where $(X, Y) \in \Me_2(\C)\times \Me_2(\C)$ (see, for instance, \cite{VP}, 9.5).  The algebraic factor $\Big(\Me_2(\C)\times \Me_2(\C)\Big)/\SL_2(\C)$ is isomorphic to $A_\C^3$. Thus we have the quotient  map $\pi: \Me_2(\C)\times \Me_2(\C)\rightarrow A_\C^3$ where $\pi: (X, Y) = (\tr X, \tr XY, \tr Y)$. In every fiber of this map there is only one closed orbit.

Now we consider the restriction of $\pi_{t, t^\prime}$ on the closed subset $C_t\times C_{t^\prime}\subset\Me_2(\C)\times \Me_2(\C)$ (recall, that the conjuagcy class of a semisimple element is a closed subset of $\Me_2(\C)$). Since $\tr X, \tr Y$ are constants on $C_t\times C_{t^\prime}$ we will consider the map $\pi_{t, t^\prime}$ as  $$\pi_{t, t^\prime}: C_t\times C_{t^\prime} \rightarrow A^1_\C = \C \,\,\,\text{where}\,\,\,\pi_{t, t^\prime}((X, Y)) = \tr(XY).$$
Every fiber of $\pi_{t, t^\prime}$ contains only one closed $G$-orbit in $C_t\times C_{t^\prime}$. Also, every fiber of $\pi_{t, t^\prime}$ has the dimension $\geq 3$ (indeed, $\dim C_t\times C_{t^\prime}  - \dim \Imm \pi_{t, t^\prime} = 3$).

\bigskip

Consider   an orbit of the type $\mathcal O_\a$.

Let $(X_\a, Y)\in \mathcal O_\a$ be the representative which is pointed out in Proposition \ref{prhhjhfffffs1}.

Here
\begin{equation}
\label{equajjhjhhhjhhfs1}
\tr(X_\a Y) = \tr \Big(\begin{pmatrix} s + \a\Delta_t &- \a\Delta_t\cr
(1+\a)\Delta_t& \,\,\,\,s^{-1} - \a\Delta_t\cr \end{pmatrix}\begin{pmatrix} r&0\cr
0&r^{-1}\cr \end{pmatrix}\Big) = sr + s^{-1}r^{-1} +\a\Delta_t\Delta_{t^\prime}.
\end{equation}
Further, let $Y= \begin{pmatrix} r&0\cr
0&r^{-1}\cr \end{pmatrix}$. Then for every $a \in K$
\begin{equation}
\label{equahhjhhjhhjs1}
\tr(XY) = \begin{cases}  sr + s^{-1}r^{-1}\,\,\,\text{if}\,\, X = \begin{pmatrix} s  &a\cr
0& \,\,\,\,s^{-1} \cr \end{pmatrix},\,\,\begin{pmatrix} s  &0\cr
a& \,\,\,\,s^{-1} \cr \end{pmatrix},\\
$\,$\\
s^{-1}r + sr^{-1}\,\,\,\,\,\text{if}\,\, X = \begin{pmatrix} s^{-1}  &a\cr
0& \,\,\,\,s \cr \end{pmatrix},  \,\,\begin{pmatrix} s^{-1}  &0\cr
a& \,\,\,\,s \cr \end{pmatrix}.\end{cases}.
\end{equation}
Note,
$$ (sr + s^{-1}r^{-1}) - (s^{-1}r + sr^{-1}) = (s - s^{-1})(r - r^{-1}) = \Delta_t\Delta_{t^\prime} \ne 0\Rightarrow$$

\begin{equation}
\label{equahhjhhhhhhhjjhs1}
sr + s^{-1}r^{-1}  + \underbrace{(-1) }_{= \a} \Delta_t\Delta_{t^\prime} = (s^{-1}r + sr^{-1})
\end{equation}
Let  $\a \ne 0,\, -1$.  From \ref{equajjhjhhhjhhfs1}, \ref{equahhjhhhhhhhjjhs1} we get
\begin{equation}
\label{equahhjhhh21hhhhjjhs1}
 \tr(X_\a Y) \ne sr + s^{-1}r^{-1},\,\,\,s^{-1}r + sr^{-1}.
 \end{equation}
 Now \ref{equahhjhhjhhjs1}, \ref{equahhjhhh21hhhhjjhs1} imply
that $G$-orbits $\mathcal O_U^+, \,\mathcal O_T^+,\,\mathcal O_V^-,\,\,\mathcal O_T^-$   cannot be in the same fiber of $\pi_{t,\, t^\prime}$  that contains $(X_\a, Y)$. Since the stabilizer of $(X_\a, Y)$ is equal to $\{\pm \dot e\}$, the dimension of $G$-orbit is equal to 3. Hence the orbit  of $(X_\a, Y)$ is closed and coincides with the fiber of $\pi_{t, t^\prime}$ which contains $(X_\a, Y)$.

Consider the orbits $\mathcal O_{0}, \mathcal O_U^+$. Both orbits have   dimension 3 and belong to the fiber $\pi^{-1}(sr +s^{-1}r^{-1})$ (see, \ref{equahhjhhjhhjs1}). Moreover, in the same fiber we have 2-dimensional closed orbit $\mathcal O_T^+$ and
$$\overline{\mathcal O_{0}}\setminus \mathcal O_{0} = \overline{\mathcal O_U^+}\setminus \mathcal O_U^+ = \mathcal O_T^+.$$
Indeed, for any fixed $0\ne a, b \in K$ we have
$$\overline{\{\begin{pmatrix}a& c\cr 0&b\cr \end{pmatrix}\,\,\, \mid\,\, 0\ne c \in K\}} \setminus \{\begin{pmatrix}a& c\cr 0&b\cr \end{pmatrix}\,\,\, \mid\,\, 0\ne c \in K\} = \{\begin{pmatrix} a&0\cr 0&b\cr\end{pmatrix}\}$$
(here $\overline{X}$ is the closure in Zariski topology). The same equality hold  for lower triangular matrices.

The analogical  result we can get for $\mathcal O_{-1}, \mathcal O_V^-, \mathcal O_T^-$.

Now we summarize the facts on adherence of orbits
\begin{prop}

$\, $

i. The orbits $\mathcal O_\a,$ where $\a \ne 0, -1,$ are closed 3-dimensional $G$-orbits which coinside with fibers $\pi_{t, t^\prime}^{-1}(l_\a)$ for
$l_\a = \tr (X_\a, Y) \ne  sr + s^{-1}r^{-1},  s^{-1}r + sr^{-1}$.

ii. The fiber $\pi_{t, t^\prime}^{-1}(l_0)$ where $l_0 = \tr (X_0 Y) =  sr + s^{-1}r^{-1}$ consists of  two 3-dimensional orbits
$$\mathcal O_{0} = \{A\Big(\begin{pmatrix} s &0\cr
\Delta_t& \,\,\,\,s^{-1} \cr \end{pmatrix},\,\begin{pmatrix} r&0\cr
0&r^{-1}_1\cr \end{pmatrix}\Big)A^{-1}\,\,\mid\, \, \,\,A \in \SL_2(K)\},$$
$$\mathcal O_U^+ = \{A\Big(\begin{pmatrix} s &-\Delta_t\cr
0& \,\,\,\,s^{-1} \cr \end{pmatrix},\,\begin{pmatrix} r&0\cr
0&r^{-1}_1\cr \end{pmatrix}\Big)A^{-1}\,\,\mid\, \,\,A \in \SL_2(K)\}$$
and the closed 2-dimensional orbit
$$\mathcal O_T^+ = \{A\Big(\begin{pmatrix} s &0\cr
0& \,\,\,\,s^{-1} \cr \end{pmatrix},\,\begin{pmatrix} r&0\cr
0&r^{-1}_1\cr \end{pmatrix}\Big)A^{-1}\,\,\mid\, \,\,A \in \SL_2(K)\}$$
which  coincides with $\overline{\mathcal O_{0}}\cap  \overline{\mathcal O_U^+}$.

iii. The fiber $\pi_{t, t^\prime}^{-1}(l_{-1})$ where $l_{-1} = \tr (X_{-1} Y) =  s^{-1}r + sr^{-1}$ consists of  two 3-dimensional orbits
$$\mathcal O_{-1} = \{A\Big(\begin{pmatrix} s^{-1} &\Delta_t\cr
0& \,\,\,\,s \cr \end{pmatrix},\,\begin{pmatrix} r&0\cr
0&r^{-1}_1\cr \end{pmatrix}\Big)A^{-1}\,\,\mid\, \, \,\,A \in \SL_2(K)\},$$
$$\mathcal O_V^- = \{A\Big(\begin{pmatrix} s^{-1} &0\cr
\Delta_t& \,\,\,\,s \cr \end{pmatrix},\,\begin{pmatrix} r&0\cr
0&r^{-1}_1\cr \end{pmatrix}\Big)A^{-1}\,\,\mid\, \,\,A \in \SL_2(K)\}$$
and the closed 2-dimensional orbit
$$\mathcal O_T^- = \{A\Big(\begin{pmatrix} s^{-1} &0\cr
0& \,\,\,\,s \cr \end{pmatrix},\,\begin{pmatrix} r&0\cr
0&r^{-1}_1\cr \end{pmatrix}\Big)A^{-1}\,\,\mid\, \,\,A \in \SL_2(K)\}$$
which  coincides with $\overline{\mathcal O_{-1}}\cap  \overline{\mathcal O_V^-}$.
\end{prop}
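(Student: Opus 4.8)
The plan is to treat this statement as an organized summary of the orbit analysis carried out just above it, so that the proof consists mainly of assembling three inputs: the complete list of $\SL_2(\C)$-orbits on $C_t\times C_{t^\prime}$ furnished by Proposition~\ref{prhhjhfffffs1}; the trace computations \ref{equajjhjhhhjhhfs1}, \ref{equahhjhhjhhjs1}, \ref{equahhjhhhhhhhjjhs1} and \ref{equahhjhhh21hhhhjjhs1}; and the fact, recalled above, that each fibre of the quotient morphism $\pi_{t,t^\prime}\colon C_t\times C_{t^\prime}\to A^1_\C$ contains exactly one closed $\SL_2(\C)$-orbit. Since $t,t^\prime\ne\pm\dot e$ we have $\Delta_t=s-s^{-1}\ne 0$ and $\Delta_{t^\prime}=r-r^{-1}\ne 0$. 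First I would record the dimensions of the orbits in Proposition~\ref{prhhjhfffffs1}: the stabilizer of $(t,t^\prime)$ is $T$, so $\dim\mathcal O_T^+=\dim\mathcal O_T^-=2$, whereas the stabilizers of $(X_\a,Y)$ (for any $\a$) and of the triangular representatives of $\mathcal O_U^+$ and $\mathcal O_V^-$ are the centre $\{\pm\dot e\}$, so $\dim\mathcal O_\a=\dim\mathcal O_U^+=\dim\mathcal O_V^-=3$.

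For part~(i), the crucial point is that $\a\mapsto l_\a=\tr(X_\a Y)=sr+s^{-1}r^{-1}+\a\,\Delta_t\Delta_{t^\prime}$ is injective, because $\Delta_t\Delta_{t^\prime}\ne 0$, and that by \ref{equahhjhhh21hhhhjjhs1} one has $l_\a\notin\{sr+s^{-1}r^{-1},\,s^{-1}r+sr^{-1}\}$ once $\a\ne 0,-1$. Comparing with the value list \ref{equahhjhhjhhjs1}, none of $\mathcal O_U^+,\mathcal O_T^+,\mathcal O_V^-,\mathcal O_T^-$, and no $\mathcal O_{\a^\prime}$ with $\a^\prime\ne\a$, meets the fibre $\pi_{t,t^\prime}^{-1}(l_\a)$; since by Proposition~\ref{prhhjhfffffs1} the listed orbits exhaust $C_t\times C_{t^\prime}$, this forces $\pi_{t,t^\prime}^{-1}(l_\a)=\mathcal O_\a$. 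A fibre of a morphism is closed, and $\mathcal O_\a$ is $3$-dimensional, which is part~(i).

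For parts~(ii) and~(iii) I would argue symmetrically; take~(ii). By \ref{equahhjhhjhhjs1} and the injectivity just used, the orbits on which $\tr(XY)$ equals $l_0=sr+s^{-1}r^{-1}$ are precisely $\mathcal O_0$, $\mathcal O_U^+$ and $\mathcal O_T^+$, so $\pi_{t,t^\prime}^{-1}(l_0)=\mathcal O_0\cup\mathcal O_U^+\cup\mathcal O_T^+$. The unique closed orbit of this fibre is contained in the closure of each of the three orbits, hence has minimal dimension among them; since $\mathcal O_T^+$ is the only orbit of dimension $<3$ in the fibre, the closed orbit is $\mathcal O_T^+$, so $\mathcal O_T^+\subseteq\overline{\mathcal O_0}\cap\overline{\mathcal O_U^+}$ and neither $\mathcal O_0$ nor $\mathcal O_U^+$ is closed. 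Now $\mathcal O_0$ is open and dense in the irreducible variety $\overline{\mathcal O_0}$, so $\overline{\mathcal O_0}\setminus\mathcal O_0$ is $G$-stable, closed, of dimension $<3$, and non-empty; being a union of orbits of the fibre of dimension $\le 2$ it can only be $\mathcal O_T^+$. Thus $\overline{\mathcal O_0}=\mathcal O_0\cup\mathcal O_T^+$, and likewise $\overline{\mathcal O_U^+}=\mathcal O_U^+\cup\mathcal O_T^+$; this agrees with the elementary closure computation already displayed, since conjugating the chosen representative of $\mathcal O_U^+$ by $\di(\lambda,\lambda^{-1})$, $\lambda\in\C^*$, sweeps out all pairs with first matrix $\begin{pmatrix} s&c\cr 0&s^{-1}\cr\end{pmatrix}$, $c\ne 0$, the Zariski closure of which adds precisely $(t,t^\prime)$. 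Since $\mathcal O_0$ and $\mathcal O_U^+$ are disjoint, $\overline{\mathcal O_0}\cap\overline{\mathcal O_U^+}=(\mathcal O_0\cup\mathcal O_T^+)\cap(\mathcal O_U^+\cup\mathcal O_T^+)=\mathcal O_T^+$, which is part~(ii). Part~(iii) follows verbatim after replacing $t$ by $\dot w t\dot w^{-1}$ (equivalently, interchanging $s$ and $s^{-1}$ in the first matrix): the relevant fibre is then $\pi_{t,t^\prime}^{-1}(l_{-1})$ with $l_{-1}=s^{-1}r+sr^{-1}=\tr(X_{-1}Y)$, and $\mathcal O_{-1},\mathcal O_V^-,\mathcal O_T^-$ play the roles of $\mathcal O_0,\mathcal O_U^+,\mathcal O_T^+$.

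I do not expect a genuine obstacle: the only non-elementary ingredient is the uniqueness of the closed orbit in a fibre of the quotient map, which is standard (and was already invoked, cf.\ \cite{VP}), while the triangular-matrix closure computation has been displayed. The one step deserving care is verifying that the orbit list in Proposition~\ref{prhhjhfffffs1} is genuinely complete and that the trace values separate the fibres exactly as claimed --- that is, that beyond the two coincidences of parts~(ii) and~(iii) no two of the named orbits share a fibre --- which follows from the injectivity of $\a\mapsto l_\a$ together with $sr+s^{-1}r^{-1}\ne s^{-1}r+sr^{-1}$ (again a consequence of $\Delta_t\Delta_{t^\prime}\ne 0$).
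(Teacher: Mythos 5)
Your proposal is correct and follows essentially the same route as the paper: the quotient map $\pi_{t,t^\prime}$, the trace values \ref{equajjhjhhhjhhfs1}--\ref{equahhjhhh21hhhhjjhs1} separating the fibers, the stabilizer/dimension count, the uniqueness of the closed orbit in each fiber, and the closure analysis for the triangular representatives. Your explicit use of the injectivity of $\a\mapsto l_\a$ and of the general fact that an orbit boundary is a union of lower-dimensional orbits merely makes precise steps the paper treats tersely (the paper instead displays the elementary triangular-matrix closure computation, which you also note), so the two arguments coincide in substance.
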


\end{document}